 \newtheorem{thm}{Theorem}[section]
 \newtheorem{cor}[thm]{Corollary}
 \newtheorem{lem}[thm]{Lemma}
 \newtheorem{prop}[thm]{Proposition}
 \theoremstyle{definition}
 \newtheorem{dfn}[thm]{Definition}
 \newtheorem{exmp}{Example}
 \newtheorem{nott}{Notation}
 \newtheorem{rem}{Remark}
  \DeclareMathAlphabet{\mathsfsl}{OT1}{cmss}{m}{sl}
  \newcommand{\calA}{\mathcal{A}}
\newcommand{\calJ}{\mathcal{J}}
\newcommand{\calL}{\mathcal{L}}
\newcommand{\IM}{\mathrm{Im}}
  \newcommand{\RE}{\mathrm{Re}}
 \newcommand{\Enum}{\mathbb{E}}
 \newcommand{\Rnum}{\mathbb{R}}
 \newcommand{\Cnum}{\mathbb{C}}
 \newcommand{\Nnum}{\mathbb{N}}
 \newcommand{\mi}{\mathrm{i}}
 \newcommand{\dif}{\mathrm{d}}
  \newcommand{\diag}{\mathrm{diag}}
 \newcommand{\tensor}[1]{\mathsf{#1}}
 \newcommand{\abs}[1]{\left\vert#1\right\vert}
 \newcommand{\set}[1]{\left\{#1\right\}}
 \newcommand{\norm}[1]{\left\Vert#1\right\Vert}
 \newcommand{\innp}[1]{\langle {#1}\rangle}
\title{On the eigenfunctions of the complex Ornstein-Uhlenbeck operators}
\author{\rm\small
\noindent CHEN Yong\\
\noindent \footnotesize School of Mathematics and Computing Science, Hunan
University of Science and Technology,\\
\noindent \footnotesize Xiangtan, Hunan, 411201,
P.R.China. chenyong77@gmail.com\\
\rm\small \noindent LIU Yong\\
\noindent \footnotesize LMAM, School of Mathematical Sciences, \
Center for Statistical Science,\\
\noindent \footnotesize Peking University, Beijing,  {\rm 100871}, P. R. China.
liuyong@math.pku.edu.cn (Corresponding author)\\
}
\date{}
\begin{document}
\maketitle
\maketitle \noindent {\bf Abstract } \\
Starting from the 1-dimensional complex-valued Ornstein-Uhlenbeck process, we present two natural ways to imply the associated eigenfunctions of the
2-dimensional normal Ornstein-Uhlenbeck operators in the complex Hilbert space $L_{\Cnum}^2(\mu)$. We call the eigenfunctions Hermite-Laguerre-Ito polynomials. In addition, the Mehler summation formula for the complex process are shown.\\
 {\bf MSC:} 60H10,60H07,60G15.
\maketitle


\section{Introduction}
There are abundant
literature in stochastic analysis concerning to the Hermite polynomials and the 1-dimensional real-valued Ornstein-Uhlenbeck process (\cite{guo,Ma,shg} and references therein),
\begin{equation*}
  \dif X_t=-bX_t\dif t+ \sqrt{2\sigma^2}\dif B_t.
\end{equation*}
If the processes $X_t,\,B_t$ and the coefficient $b$ are all complex-valued, what new findings can we get? Or say, what is the meaning of the 1-dimensional complex-valued Ornstein-Uhlenbeck process
\begin{equation}\label{cp}
  \dif Z_t=-\alpha Z_t\dif t+ \sqrt{2\sigma^2}\dif \zeta_t
\end{equation}
where $Z_t=X_1(t)+\mi X_2(t)$, $\alpha=ae^{\mi \theta}= r+\mi \Omega$, and $\zeta_t=B_1(t)+\mi B_2(t)$ is a complex Brownian motion?

First, it is clear that this complex-valued process can be represented by 2-dimensional nonsymmetric Ornstein-Uhlenbeck process
\begin{align}
\begin{bmatrix}
\dif X_1(t)\\ \dif X_2(t) \end{bmatrix}& =\begin{bmatrix} -a\cos \theta & a \sin \theta\\ -a \sin \theta & -a\cos \theta \end{bmatrix}
\begin{bmatrix} X_1(t)\\ X_2(t) \end{bmatrix} \dif t + \sqrt{2\sigma^2}
\begin{bmatrix} \dif B_1(t)\\ \dif B_2(t) \end{bmatrix}\nonumber \\
&=\begin{bmatrix} -r & \Omega\\ -\Omega & -r  \end{bmatrix}
\begin{bmatrix} X_1(t)\\ X_2(t) \end{bmatrix} \dif t + \sqrt{2\sigma^2}
\begin{bmatrix} \dif B_1(t)\\ \dif B_2(t) \end{bmatrix} .\label{langevin}
\end{align}
From view of the physics, this equation (\ref{langevin}) describes the motion of a charged test particle in present of a constant magnetic field and is
also called the A-Langevin equation in Ref.~\cite[p181-186]{bal}. For the process (\ref{langevin}), its generator is
\begin{equation}\label{ou.op}
A = (-rx+\Omega y)\frac{\partial}{\partial x} +
(-\Omega x-ry)\frac{\partial}{\partial y} +\sigma^2
(\frac{\partial^2}{\partial x^2}+\frac{\partial^2}{\partial y^2})\, ,
\end{equation}
and its stationary distribution is
\begin{equation}\label{nvmea}
\dif\mu = \frac{r}{2\pi \sigma^2
}\exp\set{-\frac{r(x^2+y^2)}{2\sigma^2}}\dif x\dif y.
\end{equation}
In the Hilbert space $L^2(\mu)$,  the adjoint of the operator $A$ is
\begin{equation}
A^* = (-rx-\Omega y)\frac{\partial}{\partial x} -
(-\Omega x+ry)\frac{\partial}{\partial y} +\sigma^2
(\frac{\partial^2}{\partial x^2}+\frac{\partial^2}{\partial y^2}).
\end{equation}
Thus, $A$ is a nonsymmetric operator and  it satisfies $A^* A=AA^*$ formally (e.g. it is valid for any polynomial ), and in fact, one can show that $A$ is a normal operator \cite{con}.

Now we address the first question, what is the spectrum and the associated eigenfunctions of the operator $A$.

The spectrum and the associated eigenfunctions of the symmetric Ornstein-Uhlenbeck operator (i.e., Hermite polynomials) are well-known \cite{guo}. Without the restriction of symmetric, the spectrum of any finite dimensional Ornstein-Uhlenbeck operator in $L^p(\mu)$ for $1< p<\infty$ is shown explicitly in \cite{Mg}. In details, let $\calL=\sum_{i,j=1}^N q_{ij}\frac{\partial^2}{\partial x_i\partial x_j}+\sum_{i,j=1}^N b_{ij}x_j \frac{\partial}{\partial x_i}$ be a possibly degenerate Ornstein-Uhlenbeck operator and assume that the associated Markov semigroup has an invariant measure $\mu$, then the spectrum of $\cal L$ is
 \begin{equation}\label{qqguai}
   \sigma(\calL)=\set{\gamma=\sum_{j=1}^r n_j \lambda_j:\,n_j\in \Nnum},
 \end{equation}
where $\lambda_1,\cdots,\lambda_r$ are the distinct eigenvalues of the matrix $(b_{ij})$ \cite[Theorem 3.1]{Mg}. This conclusion has been extended to the infinite dimensional case in some papers such as \cite{cmg,Krv,PE,Nvj}. The spectrum of the general non-symmetric Markovian semigroup is discussed in \cite{shg2}. However, for the nonsymmetric Ornstein-Uhlenbeck operator, the associated eigenfunctions are still unknown up to now.

It follows from (\ref{qqguai}) that the spectrum set of the operator $A$ in (\ref{ou.op}) in Hilbert space $L^2(\mu)$ is
 \begin{equation}\label{spectm}
   \sigma(A)=\set{-( m+n) r + \mi (m-n)\Omega, \,\,m,n=0,1,2,\dots}.
 \end{equation}

In the present note, we will show that eigenfunctions of the operator $A$
are expressed by the Hermite-Laguerre-Ito polynomials (see Definition~\ref{ijldf} and Theorem~\ref{dingl2}), which is called the Hermite polynomials of complex variables by K. Ito in \cite{ito}.  K. Ito used this polynomials to characterize the complex multiple Ito-Wiener integral, i.e., his aim was to show a relation similar to the well-known one between the real multiple Ito-Wiener integral and the Hermite polynomials \cite{guo}. His basic idea was that the Hermite polynomials of complex variables are the coefficients in the expansion of the generating function $\exp{(-t\overline{t}+t\overline{z}+\overline{t}z)}$ (see Theorem 12 in \cite{ito}). This is the complex-valued version of Hermite polynomials as the coefficients in the expansion of the generating function $\exp{(-t^2+2xt)}$.

The main purposes of this note are presenting two different ways to obtain the Hermite-Laguerre-Ito polynomials as eigenfunctions of the operator $A$. One way is that we use the direct and elementary computation by means of the normality of $A$ in $L^2(\mu)$. The other way is that by defining the complex variable creation operator and annihilation operator in the complex Hilbert space $L_{\Cnum}^2(\mu)$, we verify  that Hermite-Laguerre-Ito polynomials can be generated iteratively by the complex creation operator acting on the constant $1$ (see Definition~\ref{ijldf}). Those approaches give us some deeper and richer understanding of the 1-dimensional complex-valued Ornstein-Uhlenbeck processes and the ``nonsymmetric" stochastic analysis\cite{cmg1,Nvj1,shg1,shg2}. The concrete calculations are given in section 2.

 In section 3, as applications of the above computations, we obtain the  Mehler transform formula and the Mehler summation formula for the complex process (\ref{cp}) (or the process (\ref{langevin})). In section 4, a high-dimensional example which can be decomposed to the summation of series of 1-dimensional complex-valued Ornstein-Uhlenbeck processes is given. Finally, some tedious computations are listed in Appendix.

\section{The Hermite-Laguerre-Ito polynomials on $\Cnum$ (or say: $\Rnum^2$) }\label{sec2}
In this section, we present two way to imply the Hermite-Laguerre-Ito polynomials on $\Cnum$. One is by means of the normality of the operator, the other is by means of the creation operator and annihilation operator \cite{Ma}.

\subsection{From the view of the normal operator}\label{sec2.1}
Let $\rho=\frac{2\sigma^2}{r}$ and $\mi=\sqrt{-1}$. Denote by $H_n(x,\rho)$ the Hermite polynomials.  Let
\begin{equation}
\calA_s :=\frac{1}{2} (A +A^*), \quad \calJ :=\frac{1}{2\mi}
(A - A^*).
\end{equation}
Then we have that
\begin{eqnarray}\label{ou}
\calA_s = (\sigma^2\frac{\partial^2}{\partial x^2}-rx\frac{\partial}{\partial x})
+ (\sigma^2\frac{\partial^2}{\partial y^2}-ry\frac{\partial}{\partial y}),\quad
\calJ = -\mi\Omega (y\frac{\partial}{\partial x} -x\frac{\partial}{\partial y}).
\end{eqnarray}

 It is well known that  $\set{H_k(x,\frac{\rho}{2})H_{m+n-k}(y,\frac{\rho}{2}),\,0\leqslant k\leqslant m}$ is an orthogonal basis
 of $S_{m+n}$,  the characteristic subspace associated with the eigenvalue $-(m+n)r$ of $\calA_s$. Note that $A A^*= A^*A$ is valid for any polynomial.
  Thus $\calA_s \calJ= \calJ \calA_s$. Restriction of  $\calA_s, \calJ$ to the finite dimensional space $S_{m+n}$, the self-adjoint
  operators $\calA_s$ and $\calJ $ have common eigenfunctions which make up an orthogonal basis of $S_{m+n}$.
  It follows that each common eigenfunction is a linear combination of $H_k(x,\frac{\rho}{2})H_{m+n-k}(y,\frac{\rho}{2}),\,0 \leqslant k\leqslant m+n,$. Then we have the following:
\begin{prop}\label{pop1}
Let $l=m+n$ with $m,n\in \Nnum$ and $\vec{\beta}=(\beta_0,\,\beta_1,\,\beta_2,\dots,\beta_{l})'$, where
$\beta_k$ are complex numbers. If the function $J_{m,n}(x,y)=\sum\limits^{l}_{k=0}\beta_k H_k(x,\frac{\rho}{2})H_{l-k}(y,\frac{\rho}{2})$ satisfies
\begin{equation}\label{eigen.j0}
   \calJ J_{m,n}(x,y)= -\mi \lambda \Omega J_{m,n}(x,y),
\end{equation}
then the linear equation
\begin{equation}\label{linea}
\tensor{M}(\lambda)\vec{\beta}=0
\end{equation}
holds, where $\tensor{M}(\lambda)$ is an $(l+1)\times (l+1)$ tridiagonal matrix
  \begin{equation}\label{matrixM}
\tensor{M}(\lambda)= \left[
\begin{array}{llllllll}
     -\lambda & 1 &0    & 0   & \dots  & 0 & 0& 0\\
     -l & -\lambda &2  & 0   & \dots  & 0 & 0& 0\\
     0   & 1-l & -\lambda &3  & \dots  & 0 & 0& 0\\
     \hdotsfor{8}\\
     0   & 0   & 0   &0    & \dots  &-2 &-\lambda  & l\\
     0   & 0   & 0   &0    & \dots  &0 &-1  & -\lambda
\end{array}
\right ].
\end{equation}
When $\lambda=-(m-n)\mi$,
 Eq.(\ref{linea}) has a nonzero solution, and the solution $\vec{\beta}$ satisfies
 \begin{equation}\label{itheta2}
\sum\limits^l_{k=0}\beta_{k} x^{k} y^{l-k}=(x+\mi y)^{m-n}(x^2+y^2)^n c = (x+\mi y)^{m}(x-\mi y)^n c,\quad x,y\in \Rnum,
\end{equation}
where $c$ is a constant.
\end{prop}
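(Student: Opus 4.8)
The plan is to turn the eigenvalue equation (\ref{eigen.j0}) into the linear system (\ref{linea}) by computing how $\calJ$ acts on the basis $e_k := H_k(x,\frac{\rho}{2})H_{l-k}(y,\frac{\rho}{2})$ of $S_l$, and then to solve that system by recognizing it as an eigenvalue problem for the rotation operator $\calD := y\frac{\partial}{\partial x}-x\frac{\partial}{\partial y}$ acting on homogeneous polynomials. Throughout I write $\calJ=-\mi\Omega\,\calD$.

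First I would record the two standard identities for the Hermite polynomials (with parameter $\frac{\rho}{2}$): $\frac{\partial}{\partial x}H_k(x,\frac{\rho}{2})=kH_{k-1}(x,\frac{\rho}{2})$ and the three-term recurrence $xH_k(x,\frac{\rho}{2})=H_{k+1}(x,\frac{\rho}{2})+\frac{k\rho}{2}H_{k-1}(x,\frac{\rho}{2})$. Applying $\calJ$ to $e_k$, differentiating each factor and then using the recurrence on $yH_{l-k}(y)$ and on $xH_k(x)$, one gets a first batch of terms proportional to $e_{k-1}=H_{k-1}(x)H_{l-k+1}(y)$ and $e_{k+1}=H_{k+1}(x)H_{l-k-1}(y)$ (both in $S_l$), plus two terms proportional to $H_{k-1}(x)H_{l-k-1}(y)$, which lie in the lower space $S_{l-2}$. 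These last two contributions carry opposite coefficients and cancel — as they must, since $\calJ$ commutes with $\calA_s$ and hence preserves $S_l$ — leaving
\[
\calJ e_k=-\mi\Omega\bigl(k\,e_{k-1}-(l-k)\,e_{k+1}\bigr).
\]
Substituting $J_{m,n}=\sum_k\beta_k e_k$ into (\ref{eigen.j0}), dividing by $-\mi\Omega$ and collecting the coefficient of each $e_j$ gives the three-term recurrence $(j+1)\beta_{j+1}-(l-j+1)\beta_{j-1}=\lambda\beta_j$ for $0\le j\le l$, with the conventions $\beta_{-1}=\beta_{l+1}=0$. Reading these $l+1$ equations row by row reproduces exactly the tridiagonal matrix $\tensor{M}(\lambda)$ of (\ref{matrixM}), which is (\ref{linea}).

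For the second assertion I would observe that the monomials $f_k:=x^k y^{l-k}$ obey the same law under $\calD$, namely $\calD f_k=k f_{k-1}-(l-k)f_{k+1}$. Therefore a vector $\vec\beta$ solves (\ref{linea}) for a given $\lambda$ if and only if the homogeneous degree-$l$ polynomial $P(x,y):=\sum_k\beta_k x^k y^{l-k}$ satisfies $\calD P=\lambda P$; that is, (\ref{linea}) is precisely the eigenvalue problem for $\calD$ written in the monomial basis, and the same vector $\vec\beta$ serves both as Hermite-expansion coefficients of $J_{m,n}$ and as monomial coefficients of $P$. It then suffices to diagonalize $\calD$. Passing to $u=x+\mi y$, $v=x-\mi y$ gives $\calD=-\mi\,(u\partial_u-v\partial_v)$, so $u^p v^q$ is an eigenvector with eigenvalue $-\mi(p-q)$. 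Expanding a degree-$l$ homogeneous $P=\sum_{p+q=l}c_{pq}u^p v^q$, the equation $\calD P=\lambda P$ forces $c_{pq}=0$ whenever $-\mi(p-q)\neq\lambda$. For $\lambda=-(m-n)\mi$ the sole surviving index is $p=m$, $q=n$ (using $p+q=l=m+n$), so the eigenspace is one-dimensional; this gives a nonzero $\vec\beta$ and $\sum_k\beta_k x^k y^{l-k}=c\,u^m v^n=c(x+\mi y)^m(x-\mi y)^n$, which is (\ref{itheta2}).

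The main obstacle is the first step: executing the action of $\calJ$ on the products $e_k$ carefully enough to see the cancellation of the $S_{l-2}$ terms, which is what makes the matrix genuinely tridiagonal. Everything after that is linear algebra, and the change of variables $u=x+\mi y,\ v=x-\mi y$ renders the identification of the eigenfunction immediate; the one point to keep straight is the double role of $\vec\beta$ noted above, which is precisely what lets us read off the clean product form $(x+\mi y)^m(x-\mi y)^n$ from a problem originally phrased in the Hermite basis.
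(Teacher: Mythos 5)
Your proof is correct. The first half --- computing $\calJ e_k=-\mi\Omega\bigl(k\,e_{k-1}-(l-k)\,e_{k+1}\bigr)$ from the Hermite derivative and three-term recurrence, watching the degree-$(l-2)$ terms cancel, and reading off the tridiagonal system (\ref{linea}) --- is essentially identical to the paper's argument. The second half is where you genuinely diverge. The paper first proves existence of a nonzero solution by evaluating $\det \tensor{M}(\lambda)$ (reducing it, via a citation to Proskuryakov's problem book, to $(-1)^{l+1}\prod_{m=0}^{l}(\lambda+(m-n)\mi)$), and then separately identifies the solution by introducing $I(\theta)=\sum_k\beta_k\cos^k\theta\,\sin^{l-k}\theta$, deriving $I'(\theta)=(m-n)\mi\, I(\theta)$ from the recurrence, integrating to $I(\theta)=ce^{\mi(m-n)\theta}$, and converting back by polar coordinates. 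You instead note that the monomials $x^ky^{l-k}$ transform under $\calD=y\frac{\partial}{\partial x}-x\frac{\partial}{\partial y}$ by the very same tridiagonal matrix, so (\ref{linea}) is the eigenvalue problem for $\calD$ on homogeneous degree-$l$ polynomials, and then diagonalize $\calD=-\mi(u\partial_u-v\partial_v)$ in the coordinates $u=x+\mi y$, $v=x-\mi y$. This settles existence, one-dimensionality of the eigenspace, and the product form $(x+\mi y)^m(x-\mi y)^n$ in a single stroke, with no determinant computation, and it still yields the spectrum $\{-\mi(p-q):p+q=l\}$ as a by-product. In fact the paper's $I(\theta)$ device is your argument in disguise: $I(\theta)=P(\cos\theta,\sin\theta)$ and $\calD=-\frac{\partial}{\partial\theta}$ on the circle, so the first-order ODE for $I$ is exactly the eigenvalue equation $\calD P=\lambda P$ restricted to $r=1$. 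Your version makes the underlying structure --- simultaneous diagonalization of the rotation generator --- explicit, at the modest cost of having to justify the transfer between the Hermite basis and the monomial basis, which you do correctly by exhibiting identical structure constants.
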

\begin{thm}\label{dingl2}
The eigenfunction associated with the eigenvalue $-r (m+n) - \mi (m-n)\Omega$ of the OU operator $A$ is
  \begin{equation*}
   J_{m,n}(x,y)= \left\{
      \begin{array}{ll}
      (-1)^n{n!} (x + \mi y)^{m-n}L^{m-n}_{n}(x^2+y^2,\rho),\quad m\ge n,  \\
      (-1)^{m}m!(x - \mi y)^{n-m}L^{n-m}_{m}(x^2+y^2,\rho),\quad m<n,
      \end{array}
\right. \label{hxy2}
      \end{equation*}
where $L_n^{\alpha}(x,\rho)$ is the Laguerre polynomials (see Definition~\ref{lgldf} or References~\cite{ch,guo,Le,wg}).
\end{thm}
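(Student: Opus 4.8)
The plan is to verify directly that the proposed expression is a polynomial eigenfunction of $A$ with the asserted eigenvalue, and then to fix the scalar by matching against Proposition~\ref{pop1}. The computation is cleanest in the complex coordinates $z=x+\mi y$, $\bar z=x-\mi y$. Using $\partial_x=\partial_z+\partial_{\bar z}$, $\partial_y=\mi(\partial_z-\partial_{\bar z})$ and $\alpha=r+\mi\Omega$, the generator (\ref{ou.op}) becomes
\begin{equation*}
A=4\sigma^2\partial_z\partial_{\bar z}-\alpha z\partial_z-\bar\alpha\bar z\partial_{\bar z}.
\end{equation*}
In this form $A(z^m\bar z^n)=-(\alpha m+\bar\alpha n)z^m\bar z^n+4\sigma^2 mn\,z^{m-1}\bar z^{n-1}$, so on each space of polynomials of degree at most $N$ the operator $A$ is triangular in the monomial basis $\{z^m\bar z^n\}$ with diagonal entries $-(\alpha m+\bar\alpha n)=-r(m+n)-\mi(m-n)\Omega$. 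Since (for $r,\Omega\neq0$) the map $(m,n)\mapsto(\alpha m+\bar\alpha n)$ is injective on $\Nnum^2$, every such eigenvalue is simple; hence the polynomial eigenfunction attached to $-r(m+n)-\mi(m-n)\Omega$ is unique up to a scalar, and it suffices to exhibit one.

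For $m\ge n$ I would take the ansatz $f=z^{p}g(w)$ with $p=m-n$, $w=z\bar z$, matching $(x+\mi y)^{m-n}=z^{p}$ and $x^2+y^2=z\bar z$. Substituting into the complex form of $A$ and using $z\bar z=w$, all derivatives collapse onto $z^{p}$ times a second-order expression in $w$ alone; the holomorphic factor $z^{p}$ contributes a term $-\alpha p\,f$. Rewriting the target eigenvalue as $-r(m+n)-\mi(m-n)\Omega=-\alpha p-2rn$ and demanding $Af=(-\alpha p-2rn)f$ cancels the $-\alpha p$ term, and after dividing by $2r$ and using $\rho=2\sigma^2/r$ what remains is
\begin{equation*}
\rho w\,g''(w)+\bigl(\rho(p+1)-w\bigr)g'(w)+n\,g(w)=0.
\end{equation*}
This is precisely the (scaled) associated Laguerre equation of order $p$ and degree $n$, whose polynomial solution is $L^{p}_{n}(w,\rho)$ (Definition~\ref{lgldf}); thus $z^{p}L^{m-n}_{n}(z\bar z,\rho)$ is an eigenfunction of $A$ with the required eigenvalue. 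The case $m<n$ is obtained from the symmetry $z\leftrightarrow\bar z$ (equivalently $\Omega\mapsto-\Omega$), which exchanges the two branches of the stated formula.

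To pin down the scalar I would compare leading homogeneous parts. Since $L^{p}_{n}(w,\rho)$ has degree $n$ in $w$, the top-degree part of $(-1)^{n}n!\,z^{p}L^{m-n}_{n}(z\bar z,\rho)$ is a constant multiple of $z^{p}(z\bar z)^{n}=z^{m}\bar z^{n}=(x+\mi y)^{m}(x-\mi y)^{n}$, which is exactly the leading polynomial (\ref{itheta2}) produced by the tridiagonal system of Proposition~\ref{pop1}. By the simplicity established above, our eigenfunction is a scalar multiple of $J_{m,n}$, and equality of these leading terms both confirms the identification and fixes the prefactor $(-1)^{n}n!$ once the leading coefficient of $L^{m-n}_{n}(\cdot,\rho)$ is read off from Definition~\ref{lgldf}.

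The main obstacle is not the ODE reduction, which is routine once the complex coordinates are in place, but the bookkeeping around it: confirming that the \emph{polynomial} branch of the Laguerre equation is the one selected (so that $f$ indeed has degree $m+n$ and lies in $S_{m+n}$), and reconciling the constant $(-1)^{n}n!$ with the normalization of $L^{m-n}_{n}(\cdot,\rho)$ and with the free scalar $c$ of Proposition~\ref{pop1}. As a cross-check, and in closer harmony with the ``normal operator'' viewpoint of Section~\ref{sec2.1}, I would also diagonalize $\calJ$ on $S_{m+n}$ in polar coordinates, where $y\partial_x-x\partial_y=-\partial_\phi$ exhibits the angular eigenfunctions $e^{\mi(m-n)\phi}$ directly and the radial factor is governed by the same Laguerre equation.
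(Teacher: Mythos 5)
Your proposal is correct, but it proves the theorem by a genuinely different route than the paper. The paper's own proof takes the coefficients $\vec{\beta}$ supplied by Proposition~\ref{pop1}, substitutes the integral representation (\ref{hnin}) of the Hermite polynomials into $\sum_k\beta_kH_k(x,\frac{\rho}{2})H_{l-k}(y,\frac{\rho}{2})$, passes to polar coordinates, recognizes the Bessel integral (\ref{bess}), and finally lands on the Laguerre integral representation (\ref{lgr}); it is a constructive evaluation of the Hermite expansion via a chain of special-function identities. You instead rewrite $A$ in the coordinates $(z,\bar z)$ (which is exactly the paper's later identity (\ref{eqeq})), note that $A$ is triangular on the monomial basis with distinct diagonal entries $-(\alpha m+\bar\alpha n)$ so that each eigenvalue is simple on polynomials, and reduce the eigenvalue equation by the separation ansatz $z^{m-n}g(\abs{z}^2)$ to the Laguerre ODE $\rho wg''+(\rho(p+1)-w)g'+ng=0$ — I checked that this is indeed the equation satisfied by $L_n^{p}(w,\rho)$ of Definition~\ref{lgldf}, and that your eigenvalue bookkeeping $-r(m+n)-\mi(m-n)\Omega=-\alpha p-2rn$ is right. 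Your argument is closer in spirit to the paper's second derivation (the proof of part 2 of Theorem~\ref{thjl}, which verifies (\ref{eigen}) using the recursions (\ref{recur1})--(\ref{recur2}) on the explicit formula (\ref{jl})), but even there the paper works with first-order recursions rather than the second-order ODE, and nowhere does it state your simplicity/triangularity observation, which is the cleanest justification for identifying your eigenfunction with the $J_{m,n}$ of Proposition~\ref{pop1} by matching leading homogeneous parts ($z^{m-n}(z\bar z)^n=(x+\mi y)^m(x-\mi y)^n$, consistent with (\ref{itheta2})). What each approach buys: the paper's derivation produces the closed form without knowing it in advance and stays inside the Hermite-basis framework of Section~\ref{sec2.1}, while yours is shorter, avoids the Bessel/Laguerre integral manipulations entirely, and gives uniqueness (among polynomial eigenfunctions, which is all that is needed here) for free; the prefactor $(-1)^nn!$ is in both treatments just the normalization making the top term equal to $z^m\bar z^n$, matching the paper's choice $c=1$.
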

Proof of Proposition~\ref{pop1} and Theorem~\ref{dingl2} are presented in Section~\ref{ssec5.1}.

\subsection{From the view of the creation operator and annihilation operator }
An elegant method to deduce the Hermite polynomials is given by means of the creation operator and annihilation operator \cite{Ma}. Let $x\in \Rnum$ and $\gamma(\dif x)=\frac{1}{\sqrt{2\pi\sigma}}e^{-x^2/2\sigma}\dif x$. The operator $\partial$ is the operator of differentiation(or say: the annihilation operator): $$\partial \varphi(x)=\varphi'(x),\quad \varphi(x)\in C^1(\Rnum).$$ Its adjoint operator in the Hilbert space $L^2(\gamma)$ is $$\partial^*\varphi(x)=-\varphi'(x)+\frac{x}{\sigma}\varphi(x).$$ Then the Hermite polynomials are defined as the sequence\footnote{In \cite{Ma}, the variance is $\sigma=1$.}
\begin{align*}
  H_0(x)&=1,\\
  H_n(x)&=\sigma \partial^* H_{n-1}(x)=\sigma^n (\partial^* )^n 1.
\end{align*}

Let $z=x+\mi y$ and $\mu$ be as Eq.(\ref{nvmea}).
We consider the complex Hilbert space $L_{\Cnum}^2(\mu)$ associated to the inner product $$\innp{f,g}=\int_{\Rnum^2} f\bar{g} \mu(\dif x \dif y).$$
Set $C^1_0(\Rnum^2)$ the collection of $C^1$-function with compact support. We denote by $\partial,\,\bar{\partial} $ the operators of differentiation (or say: the complex annihilation operator)
\begin{align*}
  (\partial \phi)(z)= \frac{\partial}{\partial z}\phi(z) =\frac{1}{2} (\frac{\partial}{\partial x}-\mi \frac{\partial}{\partial y})\phi(x,y),\quad
  (\bar{\partial} \phi)(z)=\frac{\partial}{\partial \bar{z}}\phi(z) =\frac{1}{2} (\frac{\partial}{\partial x}+\mi \frac{\partial}{\partial y})\phi(x,y).
\end{align*}
By Lemma~2.1 of \cite[p6]{Ma}, the direct calculating yields the adjoint of the operators $\partial,\,\bar{\partial} $ as follows.
\begin{lem}[complex creation operator]\label{lmm1}
Denote by $\partial^*,\,\bar{\partial}^*$ the operator defined, for $\phi\in C^1_0(\Rnum^2)$, by
  \begin{equation*}
    (\partial^*\phi)(z)=-\frac{\partial}{\partial \bar{z}}\phi(z)+\frac{z }{\rho}\phi(z),\quad (\bar{\partial}^*\phi)(z)=-\frac{\partial}{\partial {z}}\phi(z)+\frac{\bar{z}}{\rho }\phi(z).
  \end{equation*}
 Then if $\partial \phi$ and $\partial^*\psi\in L_{\Cnum}^2(\mu)$ we have
 \begin{equation}\label{adjt1}
   \innp{\partial\phi,\,\psi}=\innp{\phi,\,\partial^*\psi},\quad \innp{\bar{\partial}\phi,\,\psi}=\innp{\phi,\,\bar{\partial}^*\psi}.
 \end{equation}
\end{lem}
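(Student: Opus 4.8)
The plan is to establish the first identity in \eqref{adjt1} by integration by parts in the Wirtinger calculus; the second then follows from the symmetric computation with $z$ and $\bar{z}$ interchanged. First I would rewrite the Gaussian weight in complex form: since $\rho=2\sigma^2/r$ and $x^2+y^2=z\bar{z}$, the measure $\mu$ in \eqref{nvmea} has density $w(z,\bar{z})=\frac{1}{\pi\rho}e^{-z\bar{z}/\rho}$ with respect to $\dif x\,\dif y$, so that $\innp{\partial\phi,\,\psi}=\int_{\Rnum^2}(\partial\phi)\,\bar\psi\,w\,\dif x\,\dif y$.

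The computational engine is the elementary identity $\int_{\Rnum^2}\frac{\partial}{\partial z}(fg)\,\dif x\,\dif y=0$, valid for $f,g\in C^1_0(\Rnum^2)$ because $\frac{\partial}{\partial z}=\frac{1}{2}(\frac{\partial}{\partial x}-\mi\frac{\partial}{\partial y})$ is a combination of one-variable total derivatives and the integrand has compact support. Taking $f=\phi$ and $g=\bar\psi\,w$ and expanding by the product rule gives
\begin{equation*}
\innp{\partial\phi,\,\psi}=-\int_{\Rnum^2}\phi\,\frac{\partial}{\partial z}\bigl(\bar\psi\,w\bigr)\,\dif x\,\dif y=-\int_{\Rnum^2}\phi\Bigl[\tfrac{\partial\bar\psi}{\partial z}\,w+\bar\psi\,\tfrac{\partial w}{\partial z}\Bigr]\dif x\,\dif y .
\end{equation*}
Two short sub-computations then finish the algebra: differentiating the weight gives $\frac{\partial w}{\partial z}=-\frac{\bar{z}}{\rho}\,w$, and conjugation intertwines the two Wirtinger derivatives via $\frac{\partial\bar\psi}{\partial z}=\overline{\frac{\partial\psi}{\partial\bar{z}}}$ (because $\frac{\partial}{\partial x},\frac{\partial}{\partial y}$ are real operators and $\overline{\mi}=-\mi$). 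Substituting and pulling the common conjugate out of the bracket, I would obtain
\begin{equation*}
\innp{\partial\phi,\,\psi}=\int_{\Rnum^2}\phi\,\overline{\Bigl(-\tfrac{\partial\psi}{\partial\bar{z}}+\tfrac{z}{\rho}\psi\Bigr)}\,w\,\dif x\,\dif y=\innp{\phi,\,\partial^*\psi},
\end{equation*}
which is exactly the claimed formula for $\partial^*$; the formula for $\bar\partial^*$ drops out of the identical argument applied to $\frac{\partial}{\partial\bar{z}}(fg)$, using $\frac{\partial w}{\partial\bar{z}}=-\frac{z}{\rho}\,w$ and $\frac{\partial\bar\psi}{\partial\bar{z}}=\overline{\frac{\partial\psi}{\partial z}}$.

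The one point that genuinely needs care is justifying the boundary-free integration by parts under the hypotheses actually stated, where only $\partial\phi,\partial^*\psi\in L_{\Cnum}^2(\mu)$ is assumed rather than compact support of both factors. For $\phi,\psi\in C^1_0(\Rnum^2)$ the vanishing of $\int_{\Rnum^2}\frac{\partial}{\partial z}(fg)\,\dif x\,\dif y$ is immediate, so the identity holds there verbatim; the general case I would recover by approximating $\psi$ in the graph norm of $\partial^*$ by functions in $C^1_0(\Rnum^2)$ and passing to the limit, the $L^2(\mu)$ integrability assumptions ensuring that all three integrals converge and that no contribution from infinity survives. I expect this density and limiting step, rather than the Wirtinger bookkeeping, to be the only real obstacle.
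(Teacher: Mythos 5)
Your computation is correct and is exactly the ``direct calculation'' the paper alludes to when it invokes Lemma~2.1 of Malliavin: integration by parts against the Gaussian weight $w=\frac{1}{\pi\rho}e^{-z\bar z/\rho}$, using $\frac{\partial w}{\partial z}=-\frac{\bar z}{\rho}w$ and $\frac{\partial\bar\psi}{\partial z}=\overline{\frac{\partial\psi}{\partial\bar z}}$, which reproduces the stated formulas for $\partial^*$ and $\bar\partial^*$. The paper itself writes out no more detail than this, so your proof fills in the same argument rather than taking a different route.
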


Clearly, $\partial$ commutes with $\bar{\partial}$ and $\partial^* $ with $\bar{ \partial}^*$.
\begin{dfn}[Definition of the Hermite-Laguerre-Ito polynomials]\label{ijldf}
Let $m,n\in \Nnum$. We define the sequence on $\Cnum$ (or say: $\Rnum^2$)
\begin{align}
  J_{0,0}(z,\rho)&=1,\nonumber\\
  J_{m,n}(z,\rho)&={\rho}^{m+n}(\partial^*)^m(\bar{\partial}^*)^n 1.\label{itldefn}
\end{align}
We call it the Hermite-Laguerre-Ito polynomials in the present paper.
\end{dfn}
By induction on $m,n$, we see that $J_{m,n}$ is a polynomial of degree $m+n$ and its term of highest degree is $z^m\bar{z}^n$.
The first few Hermite-Laguerre-Ito polynomials are
\begin{align*}
  J_{m,0}&=z^m,\quad J_{0,n}=\bar{z}^n,\\
  J_{1,1}&= \abs{z}^2-\rho,\quad J_{2,1}=z(\abs{z}^2-2\rho),\quad J_{3,1}=z^2(\abs{z}^2-3\rho),\dots\\
  J_{1,2}&= \bar{z}(\abs{z}^2-2\rho),\quad J_{2,2}=\abs{z}^4-4\rho\abs{z}^2+2\rho^2,\quad J_{3,2}=z(\abs{z}^4-6\rho\abs{z}^2+6\rho^2),\dots\\
  \dots
\end{align*}
In general, we have that
\begin{thm}\label{ijl}
The Hermite-Laguerre-Ito polynomials satisfy
\begin{equation}\label{jal}
  J_{m,n}(z,\,\rho)=\sum_{r=0}^{m\wedge n}(-1)^r r!{m \choose r}{n \choose r}z^{m-r}\bar{z}^{n-r}\rho^{r}.
\end{equation}
\end{thm}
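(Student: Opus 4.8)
The plan is to compute the exponential generating function of the $J_{m,n}$ in closed form and then simply read off its coefficients. First I would introduce two formal variables $s,t$ and set
\[
G(s,t):=\sum_{m,n\ge 0}\frac{s^m t^n}{m!\,n!}\,J_{m,n}(z,\rho).
\]
Since $J_{m,n}=\rho^{m+n}(\partial^*)^m(\bar{\partial}^*)^n 1$ and, as observed right after Lemma~\ref{lmm1}, $\partial^*$ commutes with $\bar{\partial}^*$, this series is the formal operator exponential
\[
G(s,t)=\exp\!\big(s\rho\,\partial^* + t\rho\,\bar{\partial}^*\big)\,1 .
\]
Because every $J_{m,n}$ is a polynomial, all the manipulations below are legitimate at the level of formal power series in $s,t$, which is all that is needed to identify coefficients of the polynomial identity \eqref{jal}.

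Next I would disentangle this operator exponential. Writing the complex creation operators explicitly,
\[
\rho\,\partial^* = z - \rho\frac{\partial}{\partial\bar z},\qquad
\rho\,\bar{\partial}^* = \bar z - \rho\frac{\partial}{\partial z},
\]
and using the Wirtinger relations $\frac{\partial}{\partial\bar z}z=0$ and $\frac{\partial}{\partial z}\bar z=0$ (so that inside each operator the multiplication part commutes with the differentiation part), I factor
\[
G = \exp(sz)\,\exp\!\Big(-s\rho\frac{\partial}{\partial\bar z}\Big)\,\exp(t\bar z)\,\exp\!\Big(-t\rho\frac{\partial}{\partial z}\Big)\,1 .
\]
Now $\exp(-t\rho\,\partial_z)\,1=1$, while $\exp(-s\rho\,\partial_{\bar z})$ acts as the translation $\bar z\mapsto \bar z - s\rho$; applied to $e^{t\bar z}$ it produces the scalar factor $e^{-\rho st}$. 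Carrying this through yields the closed form
\[
G(s,t)=\exp\!\big(sz + t\bar z - \rho\,st\big).
\]

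Finally I would expand $G$ as the product of the three ordinary exponentials $e^{sz}$, $e^{t\bar z}$, $e^{-\rho st}$ and collect the coefficient of $s^m t^n$. Such a monomial arises from picking $z^{m-r}/(m-r)!$, $\bar z^{n-r}/(n-r)!$ and $(-\rho)^r/r!$ with $0\le r\le m\wedge n$, so that
\[
\frac{J_{m,n}}{m!\,n!}=\sum_{r=0}^{m\wedge n}\frac{(-\rho)^r}{r!\,(m-r)!\,(n-r)!}\,z^{m-r}\bar z^{n-r}.
\]
Multiplying by $m!\,n!$ and using the elementary identity $\frac{m!\,n!}{r!\,(m-r)!\,(n-r)!}=r!\binom{m}{r}\binom{n}{r}$ gives exactly \eqref{jal}. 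As a sanity check I would confirm the first values $J_{1,1}=\abs{z}^2-\rho$ and $J_{2,2}=\abs{z}^4-4\rho\abs{z}^2+2\rho^2$ against the list preceding the theorem.

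The only genuinely delicate point is the disentangling step: it rests on the fact that within each of $\rho\partial^*$ and $\rho\bar{\partial}^*$ the two summands commute, and on reading $\exp(-s\rho\,\partial_{\bar z})$ correctly as a shift operator. A fully self-contained alternative avoids generating functions altogether: from the definition one obtains the recursions
\[
J_{m+1,n}=zJ_{m,n}-\rho\frac{\partial}{\partial\bar z}J_{m,n},\qquad
J_{m,n+1}=\bar z J_{m,n}-\rho\frac{\partial}{\partial z}J_{m,n},
\]
and one checks by induction on $m+n$ that the right-hand side of \eqref{jal} satisfies the same recursions, the inductive step reducing to Pascal's rule $\binom{m}{s}+\binom{m}{s-1}=\binom{m+1}{s}$ together with the identity $(n-s+1)\binom{n}{s-1}=s\binom{n}{s}$. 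I expect the generating-function route to be the shorter and more transparent of the two, with the inductive argument held in reserve.
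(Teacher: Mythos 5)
Your proof is correct, but it takes a genuinely different route from the paper's. The paper proves Theorem~\ref{ijl} by double induction: starting from $J_{m,l+1}=\bar{\partial}^*J_{m,l}$ and $J_{k+1,n}=\partial^*J_{k,n}$, it invokes the derivative formulas $\partial J_{m,n}=mJ_{m-1,n}$, $\bar{\partial}J_{m,n}=nJ_{m,n-1}$ (themselves obtained from the commutation relation $\partial\partial^*-\partial^*\partial=\tfrac{1}{\rho}$) together with the Laguerre recursions (\ref{recur1})--(\ref{recur2}), establishing the closed Laguerre form (\ref{jl}) directly and then reading off (\ref{jal}) from the power series (\ref{powersr}). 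You instead compute the two-variable exponential generating function $\exp(s\rho\partial^*+t\rho\bar{\partial}^*)1=\exp(sz+t\bar z-\rho st)$ by disentangling the operator exponential, and extract the coefficient of $s^mt^n$; your disentangling is sound, since the only splittings you use are $[\partial^*,\bar{\partial}^*]=0$, $[z,\partial_{\bar z}]=0$ and $[\bar z,\partial_z]=0$, and the non-commuting pair $\exp(-s\rho\partial_{\bar z})$, $\exp(t\bar z)$ is handled correctly via the shift, which is exactly what produces the $e^{-\rho st}$ factor. Your approach buys a self-contained derivation of (\ref{jal}) that bypasses the Laguerre machinery entirely and delivers the generating function (\ref{gene}) (in polarized form, with $s=\bar\lambda$, $t=\lambda$) as a formal by-product, whereas the paper's route has the advantage of producing the Laguerre form (\ref{jl}) directly, which is the representation actually needed to match Theorem~\ref{dingl2}; note also that the paper's own proof of (\ref{gene}) relies on completeness of the $J_{m,n}$ and hence on (\ref{jal}), so your formal computation is not a shortcut through that result but an independent verification of it at the level of formal power series. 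Your fallback induction via $J_{m+1,n}=zJ_{m,n}-\rho\partial_{\bar z}J_{m,n}$ and Pascal's rule is also viable and is in fact closer in spirit to what the paper does, with binomial identities replacing the Laguerre recursions.
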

Proof of Theorem~\ref{ijl} are presented in Section~\ref{ssec5.2}.
\begin{rem}
  If $\rho=1$ then $J_{m,n}(z,\,1)$
 is called the Hermite polynomials of complex variables by K. Ito, who has shown that a close relation between the polynomials and the complex multiple Ito-Wiener integral \cite{ito}, the reader can also refer to Section~\ref{ssec5.3}. If $\zeta$ is a symmetric complex Gaussian variable,  by calculating Feynman diagrams (see \cite{jan}, p131),  Wick product is represented by
\begin{equation*}
  :\zeta^{m}\bar{\zeta}^n:=J_{m,n}(\zeta,\,\Enum{\abs{\zeta}}^2).
\end{equation*}
\end{rem}

From the above power series expression, we get that
 \begin{align}
  J_{m,n}(z,\,\rho)&= \left\{
      \begin{array}{ll}
      z^{m-n}\sum\limits_{r=0}^n(-1)^r r!{m \choose r}{n \choose r}\abs{z}^{2(n-r)}\rho^r, & m\ge n,\\
      \bar{z}^{n-m}\sum\limits_{r=0}^m(-1)^r r!{m \choose r}{n \choose r}\abs{z}^{2(m-r)}\rho^r & m<n, \\
      \end{array}\nonumber \right.\\
  &=  \left\{
      \begin{array}{ll}
      z^{m-n}(-1)^nn!L^{m-n}_n (\abs{z}^2,\,\rho), & m\ge n,\\
      \bar{z}^{n-m}(-1)^mm!L^{n-m}_m (\abs{z}^2,\,\rho), & m<n. \\
      \end{array}\right.\label{jl}
\end{align}
 Thus, we name $J_{m,n}(z,\,\rho)$ as the Hermite-Laguerre-Ito polynomials in the present paper.

\begin{thm} \label{thjl}
The Hermite-Laguerre-Ito polynomials satisfy the following:
  \begin{itemize}
\item[\textup{1)}] Orthonormal basis: $\set{(m!n!\rho^{m+n})^{-\frac12}J_{m,n}(z,\,\rho):\,m,n\in \Nnum}$ is an orthonormal basis of $L_{\Cnum}^2(\mu)$.
\item[\textup{2)}] Eigenfunctions: Let $c\in \Rnum$,
\begin{equation}\label{eigen}
  [(1+\mi c)z \frac{\partial}{\partial z}+(1-\mi c)\bar{z} \frac{\partial}{\partial \bar{z}}-2\rho \frac{\partial^2}{\partial z\partial \bar{z}} ]J_{m,n}(z,\,\rho)=[m+n+\mi (m-n)c]J_{m,n}(z,\,\rho).
 \end{equation}
\item[\textup{3)}] Generating function: Let $\lambda\in \Cnum$,
  \begin{equation}\label{gene}
    \exp\set{\lambda \bar{z} + \bar{\lambda}z-\rho |\lambda|^2}=\sum_{m=0}^{\infty}\sum_{n=0}^{\infty}\frac{\bar{\lambda}^m\lambda^n}{m!n!}J_{m,n}(z,\rho).
  \end{equation}
 \end{itemize}
 \end{thm}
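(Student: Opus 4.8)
The plan is to treat $\partial,\bar\partial,\partial^*,\bar\partial^*$ as a two\nobreakdash-mode bosonic system and to reduce all three assertions to the canonical commutation relations. First I would record, by a direct computation from Lemma~\ref{lmm1}, that
\[
[\partial,\partial^*]=[\bar\partial,\bar\partial^*]=\tfrac1\rho I,\qquad [\partial,\bar\partial^*]=[\bar\partial,\partial^*]=0,
\]
together with $[\partial,\bar\partial]=0=[\partial^*,\bar\partial^*]$ already noted after Lemma~\ref{lmm1}. Since $\mu$ is a probability measure, $\norm{1}=1$, and $\partial 1=\bar\partial 1=0$, so the constant $1$ plays the role of the vacuum. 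I would also extend the adjoint identities \eqref{adjt1} from $C^1_0(\Rnum^2)$ to all polynomials: because the Gaussian density in \eqref{nvmea} decays faster than any polynomial, the boundary terms in the integration by parts vanish, so \eqref{adjt1} holds for polynomial arguments, which is the only class occurring below.

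For part~1, I would move the creation operators across the inner product using the extended \eqref{adjt1}, obtaining $\innp{J_{m,n},J_{m',n'}}=\rho^{m+n+m'+n'}\innp{1,\bar\partial^{\,n}\partial^{\,m}(\partial^*)^{m'}(\bar\partial^*)^{n'}1}$. Commuting the annihilators through the creators with the relations above and using the single\nobreakdash-mode identity $\partial^{\,m}(\partial^*)^{m'}1=\frac{m'!}{(m'-m)!}\rho^{-m}(\partial^*)^{m'-m}1$ for $m\le m'$ (and $0$ otherwise), together with $\partial 1=\bar\partial 1=0$, forces the inner product to vanish unless $m=m'$ and $n=n'$ and gives $\norm{J_{m,n}}^2=m!\,n!\,\rho^{m+n}$; this is exactly orthonormality of the normalized family. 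Completeness then follows from the remark after Definition~\ref{ijldf} that $J_{m,n}$ has leading term $z^m\bar z^n$: this triangularity shows that $\mathrm{span}\set{J_{m,n}}$ is the space of all polynomials in $x,y$, which is dense in $L_{\Cnum}^2(\mu)$ for the Gaussian measure $\mu$.

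For part~2, I would express the operator in \eqref{eigen} through the number operators $N_1:=\rho\,\partial^*\partial$ and $N_2:=\rho\,\bar\partial^*\bar\partial$. A one\nobreakdash-line computation gives $\rho\,\partial^*\partial=z\frac{\partial}{\partial z}-\rho\frac{\partial^2}{\partial z\partial\bar z}$ and $\rho\,\bar\partial^*\bar\partial=\bar z\frac{\partial}{\partial\bar z}-\rho\frac{\partial^2}{\partial z\partial\bar z}$, so the operator in \eqref{eigen} equals $(1+\mi c)N_1+(1-\mi c)N_2$. From $[N_1,\partial^*]=\partial^*$, $[N_1,\bar\partial^*]=0$, $[N_2,\bar\partial^*]=\bar\partial^*$, $[N_2,\partial^*]=0$ and $N_11=N_21=0$, an induction on $m,n$ using \eqref{itldefn} yields $N_1J_{m,n}=mJ_{m,n}$ and $N_2J_{m,n}=nJ_{m,n}$. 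Hence the eigenvalue is $(1+\mi c)m+(1-\mi c)n=(m+n)+\mi(m-n)c$, as claimed.

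For part~3, I would use \eqref{itldefn} to rewrite the right\nobreakdash-hand side as an operator exponential acting on the vacuum, $\sum_{m,n}\frac{\bar\lambda^m\lambda^n}{m!n!}J_{m,n}=\exp(\rho\bar\lambda\,\partial^*)\exp(\rho\lambda\,\bar\partial^*)1=\exp(\rho\bar\lambda\,\partial^*+\rho\lambda\,\bar\partial^*)1$, the last step because $\partial^*$ and $\bar\partial^*$ commute. Writing $\rho\bar\lambda\,\partial^*+\rho\lambda\,\bar\partial^*=D+M$ with the derivation $D=-\rho\bar\lambda\frac{\partial}{\partial\bar z}-\rho\lambda\frac{\partial}{\partial z}$ and the multiplication $M=\bar\lambda z+\lambda\bar z$, one finds the scalar commutator $[D,M]=-2\rho\abs{\lambda}^2$, so the Baker--Campbell--Hausdorff formula truncates to $e^{M+D}=e^Me^De^{-\rho\abs{\lambda}^2}$; since $D1=0$ gives $e^D1=1$, this equals $e^{-\rho\abs{\lambda}^2}e^{\bar\lambda z+\lambda\bar z}$, the left\nobreakdash-hand side of \eqref{gene}, with absolute convergence for each fixed $z$ immediate from the polynomial growth of $J_{m,n}$ against the factorials. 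The hard part will not be any single computation but the careful bookkeeping of the two\nobreakdash-mode commutation relations, in particular checking that the cross commutators vanish and that \eqref{adjt1} is legitimate on non\nobreakdash-compactly supported polynomials; the one genuinely analytic ingredient is the density of polynomials in $L_{\Cnum}^2(\mu)$ needed for completeness in part~1, which is the standard fact for Gaussian measures and which I would cite rather than reprove.
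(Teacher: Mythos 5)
Your proof is correct, but it follows a genuinely different route from the paper's. The paper works from the explicit Laguerre representation $J_{m,n}(z,\rho)=(-1)^n n!\,z^{m-n}L^{m-n}_n(|z|^2,\rho)$ (for $m\ge n$): part~1's orthogonality is computed in polar coordinates and reduced to the classical orthogonality relation for $L_n^\alpha$; part~2 is verified by substituting the Laguerre form into the operator and invoking the recursions $L_n^{\alpha}=L_n^{\alpha+1}-\rho L_{n-1}^{\alpha+1}$ and $x\frac{\partial}{\partial x}L_n^{\alpha}=nL_n^{\alpha}-(n+\alpha)\rho L_{n-1}^{\alpha}$; part~3 is obtained by expanding $w=\exp\{\lambda\bar z+\bar\lambda z-\rho|\lambda|^2\}$ in the orthogonal basis and computing the Fourier coefficients $\innp{w,J_{m,n}}=\rho^{m+n}\innp{\partial^m\bar\partial^n w,1}=\bar\lambda^m\lambda^n\rho^{m+n}$, with pointwise convergence supplied by Szeg\H{o}'s uniform bound on $L_n^\alpha$. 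You instead stay entirely inside the two-mode CCR algebra: Fock-space contractions for orthogonality, number operators $N_1=\rho\,\partial^*\partial$, $N_2=\rho\,\bar\partial^*\bar\partial$ for the eigenvalue equation (your identity $(1+\mi c)N_1+(1-\mi c)N_2=A/(-r)$ checks out, and $N_1J_{m,n}=mJ_{m,n}$ follows even more directly from the paper's own relation $\partial J_{m,n}=mJ_{m-1,n}$), and a displacement-operator/BCH computation for the generating function. Your route buys conceptual economy --- it never touches a Laguerre identity and makes the eigenvalue $(1+\mi c)m+(1-\mi c)n$ transparent --- while the paper's route buys explicitness (the Laguerre form is needed elsewhere anyway) and a ready-made uniform convergence estimate. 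Two points you should nail down to make your version airtight: the BCH identity $e^{M+D}=e^Me^De^{-\rho|\lambda|^2}$ applied to $1$ is a priori an identity of formal power series in $\lambda,\bar\lambda$, so the asserted absolute convergence (which does hold: $\sum_{m,n,r}|\lambda|^{m+n}|z|^{m+n-2r}\rho^r/(r!(m-r)!(n-r)!)=e^{\rho|\lambda|^2+2|\lambda||z|}$) must be stated as the justification for rearranging; and the extension of \eqref{adjt1} beyond $C^1_0$ needs to cover not just polynomials but the exponential weight implicit in moving $(\partial^*)^m$ across an inner product with non-polynomial test functions --- for part~1 polynomials suffice, so this is only a matter of saying so.
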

Proof of Theorem~\ref{thjl} are presented in Section~\ref{ssec5.2}.

Let $\rho=\frac{2\sigma^2}{r},\,c=\frac{\Omega}{r}$. It is well known that $\frac{\partial}{\partial z}=\frac12 (\frac{\partial}{\partial x}-\mi \frac{\partial}{\partial y}),\, \frac{\partial}{\partial \bar{z}}=\frac12 (\frac{\partial}{\partial x}+\mi \frac{\partial}{\partial y}),\,4 \frac{\partial^2}{\partial z\partial \bar{z}}=\frac{\partial^2}{\partial x^2}+\frac{\partial^2}{\partial y^2}$, thus
 \begin{align}
 {A} &=(-rx+\Omega y)\frac{\partial}{\partial x} +
 (-\Omega x-ry)\frac{\partial}{\partial y} +\sigma^2 (\frac{\partial^2}{\partial x^2}+\frac{\partial^2}{\partial y^2})\nonumber\\
 &=-r[(1+\mi c)z \frac{\partial}{\partial z}+(1-\mi c)\bar{z} \frac{\partial}{\partial \bar{z}}-2\rho\frac{\partial^2}{\partial z\partial \bar{z}}]. \label{eqeq}
 \end{align}
Then it follows from Theorem~\ref{thjl} that we have
\begin{thm}
Let $\rho=\frac{2\sigma^2}{r}$. The Hermite-Laguerre-Ito polynomials $J_{m,n}(z,\rho)$ are the eigenfunctions of 2-dimensional normal Ornstein-Uhlenbeck operators $A$ on $L_{\Cnum}^2(\mu)$ with respect to the eigenvalue $-(m+n)r-\mi (m-n)\Omega$, $m,n\ge 0$. And the operator ${A}$ on the Hilbert space $L_{\Cnum}^2(\mu)$
has a pure point spectrum.\footnote{The conclusion of pure point spectrum has been known in \cite{Mg} by a different way.}
\end{thm}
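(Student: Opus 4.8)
The plan is to read the eigenvalue directly off the two facts already in hand—the operator identity (\ref{eqeq}) and the eigenfunction relation (\ref{eigen}) of Theorem~\ref{thjl}—and then to upgrade the statement ``every $J_{m,n}$ is an eigenfunction'' to ``$A$ has pure point spectrum'' by invoking completeness of the $J_{m,n}$ together with the normality of $A$.

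First I would set $c=\Omega/r$ and abbreviate $\mathcal{B}=(1+\mi c)z\frac{\partial}{\partial z}+(1-\mi c)\bar{z}\frac{\partial}{\partial\bar{z}}-2\rho\frac{\partial^2}{\partial z\partial\bar{z}}$, so that (\ref{eqeq}) reads simply $A=-r\mathcal{B}$. Applying $\mathcal{B}$ to $J_{m,n}$ through (\ref{eigen}) gives $\mathcal{B}J_{m,n}=[m+n+\mi(m-n)c]J_{m,n}$, whence
\[
A J_{m,n}(z,\rho)=-r[m+n+\mi(m-n)c]J_{m,n}(z,\rho)=[-r(m+n)-\mi(m-n)\Omega]J_{m,n}(z,\rho),
\]
using $rc=\Omega$. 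This settles the first assertion: each $J_{m,n}$ is an eigenfunction of $A$ with the claimed eigenvalue $-(m+n)r-\mi(m-n)\Omega$.

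Next I would establish the pure point spectrum. Part~1) of Theorem~\ref{thjl} asserts that the normalized family $\set{(m!n!\rho^{m+n})^{-1/2}J_{m,n}(z,\rho):\,m,n\in\Nnum}$ is an orthonormal basis of $L_{\Cnum}^2(\mu)$; thus $A$ possesses a complete orthonormal system of eigenvectors. Since $A$ is normal (the identity $A^*A=AA^*$ recorded in the introduction), the spectral theorem for normal operators applies and an eigenbasis forces the spectral measure to be purely atomic, so the spectrum coincides with the set of eigenvalues and carries neither continuous nor residual part. Concretely, expanding any $f\in L_{\Cnum}^2(\mu)$ in this basis and letting $A$ act term by term exhibits the resolvent $(\lambda-A)^{-1}$ as a bounded operator for every $\lambda$ outside $\set{-r(m+n)-\mi(m-n)\Omega:\,m,n\ge 0}$, so the spectrum is exactly this point set.

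The main obstacle is the functional-analytic bookkeeping for the unbounded operator $A$: one must fix a dense domain—naturally the linear span of the $J_{m,n}$, i.e.\ the polynomials—on which $A$ is defined and closable to a normal operator, and then verify that the eigenfunction expansion of an arbitrary $f\in L_{\Cnum}^2(\mu)$ converges in the $L^2$-norm so that the formal term-by-term action of $A$ and the ensuing resolvent estimate are legitimate. The algebraic identification of the eigenvalues is immediate; the care lies entirely in passing from the formal eigenrelation to a genuine spectral statement, which is precisely where completeness (part~1)) and normality are invoked.
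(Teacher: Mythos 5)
Your argument is correct and is essentially the paper's own: the paper likewise obtains the theorem by combining the operator identity (\ref{eqeq}) with parts 1) and 2) of Theorem~\ref{thjl}, stating only ``it follows from Theorem~\ref{thjl}'' without further proof. Your additional remarks on the domain, closability, and the resolvent are a harmless (and reasonable) elaboration of what the paper leaves implicit.
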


These eigenfunctions $J_{m,n}(z,\rho)$ can be employed for, say, orthogonal
decomposition like the Hermite polynomials.
\begin{cor}
  Every function f in $L_{\Cnum}^2(\mu)$ has a unique series expression
  \begin{equation*}
    f(x,y)=\sum_{m=0}^{\infty}\sum_{n=0}^{\infty}a_{m,n}\frac{J_{m,n}(z,\rho)}{{m!n!\rho^{m+n}}},
  \end{equation*}
  where the coefficients $a_{m,n}$ are given by $ a_{m,n}=\innp{f,\,J_{m,n}}$.
  Moreover, we have \begin{equation*}
    \norm{f}^2=\sum_{m=0}^{\infty}\sum_{n=0}^{\infty}\frac{|a_{m,n}|^2}{m!n!\rho^{m+n}}.
  \end{equation*}
\end{cor}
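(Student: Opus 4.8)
The plan is to derive this corollary directly from part 1) of Theorem~\ref{thjl} together with the standard theory of orthonormal bases in a Hilbert space, so that essentially no new analytic work is required. Write $e_{m,n}:=(m!n!\rho^{m+n})^{-\frac12}J_{m,n}(z,\rho)$, so that part 1) of Theorem~\ref{thjl} asserts precisely that $\set{e_{m,n}:\,m,n\in\Nnum}$ is an orthonormal basis of the complex Hilbert space $L_{\Cnum}^2(\mu)$.

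First I would recall the abstract fact: if $\set{e_k}$ is an orthonormal basis of a Hilbert space $H$, then every $f\in H$ admits the norm-convergent expansion $f=\sum_k \innp{f,\,e_k}e_k$, the coefficients $\innp{f,\,e_k}$ being uniquely determined (pairing the expansion with a fixed $e_k$ and using orthonormality forces the $k$-th coefficient to equal $\innp{f,\,e_k}$, which yields uniqueness), and Parseval's identity $\norm{f}^2=\sum_k \abs{\innp{f,\,e_k}}^2$ holds. Since the index set here is $\Nnum\times\Nnum$ and the basis is the family $e_{m,n}$ above, these assertions transfer verbatim to a double series in $(m,n)$.

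The only remaining step is to unwind the normalization constants. Using that the normalization factor $(m!n!\rho^{m+n})^{-\frac12}$ is real and positive, hence unaffected by the conjugate-linearity of $\innp{\cdot,\,\cdot}$ in its second slot, I would compute $\innp{f,\,e_{m,n}}=(m!n!\rho^{m+n})^{-\frac12}\innp{f,\,J_{m,n}}=(m!n!\rho^{m+n})^{-\frac12}a_{m,n}$, where $a_{m,n}:=\innp{f,\,J_{m,n}}$. Substituting this together with $e_{m,n}=(m!n!\rho^{m+n})^{-\frac12}J_{m,n}$ into $f=\sum_{m,n}\innp{f,\,e_{m,n}}e_{m,n}$ merges the two factors of $(m!n!\rho^{m+n})^{-\frac12}$ into the single denominator $m!n!\rho^{m+n}$, giving the claimed expansion and its uniqueness; the same substitution into Parseval's identity yields $\norm{f}^2=\sum_{m,n}\abs{a_{m,n}}^2/(m!n!\rho^{m+n})$.

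I do not expect a genuine obstacle here: the substantive content, namely the completeness (totality) of the system $\set{J_{m,n}}$ in $L_{\Cnum}^2(\mu)$, is already packaged inside Theorem~\ref{thjl} 1). The only points demanding care are bookkeeping ones, namely tracking the conjugate-linear convention of the inner product so that $a_{m,n}=\innp{f,\,J_{m,n}}$ appears on the correct side, and distributing the normalization factor correctly between the coefficient and the basis element.
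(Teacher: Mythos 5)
Your proposal is correct and matches the paper's (implicit) argument: the corollary is stated as an immediate consequence of Theorem~\ref{thjl}~1), and your derivation via the standard Fourier expansion and Parseval identity for an orthonormal basis, with the normalization factors $(m!n!\rho^{m+n})^{-\frac12}$ unwound exactly as you describe, is precisely what is intended. No gaps.
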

The following are two examples.
\begin{align*}
  z^m\bar{z}^n&= \sum_{k=0}^{m\wedge n} {m \choose k}{n \choose k}k!\rho^{k} J_{m-k,n-k}(z,\rho), \nonumber\\
    H_k(x,\frac{\rho}{2})H_{l-k}(y,\frac{\rho}{2})& = \frac{\mi^{l-k}}{2^{l}}\sum\limits_{m=0}^l \sum_{u+v=m}{k \choose u}{l-k \choose v}(-1)^{v} J_{m,l-m}(z,\rho).
   \end{align*}
Clearly, the last equation displayed is equivalent to
   \begin{equation*}
J_{m,l-m}(z,\rho) = \sum\limits_{k=0}^{l}{\mi^{l-k}}\sum_{u+v=k}{m \choose u}{l-m \choose v}(-1)^{l-m-v} H_k(x,\frac{\rho}{2})H_{l-k}(y,\frac{\rho}{2}),
   \end{equation*}
   which is exact the expression of $J_{m,n}$ given in Proposition~\ref{pop1}.
\section{The normal Ornstein-Uhlenbeck semigroup and its Mehler summation formula }\label{sec3}
As an application to Section~\ref{sec2}, we will show the Mehler summation formula for the 2-dimensional normal Ornstein-Uhlenbeck semigroup in this section. This is an analogue of the Mehler summation formula for the real 1-dimensional Ornstein-Uhlenbeck process \cite[p51]{jan}, i.e., if $\gamma(\dif y)=\frac{1}{\sqrt{2\pi\rho} }e^{-\frac{y^2}{2\rho}}$, we have that
\begin{align*}
  \mathcal{M}_u\varphi (x)&= \int _{\Rnum}\,\varphi(y)\frac{1}{\sqrt{1-u^2} }\exp\set{-\frac{u^2{y}^2+u^2{x}^2-2uxy}{2\rho(1-u^2)}}\gamma(\dif y),\\
  \sum_{n=0}^{\infty}\frac{u^n}{n!\rho^n}H_n(x,\rho)H_n(y,\rho)&=\frac{1}{\sqrt{1-u^2} }\exp\set{-\frac{u^2{y}^2+u^2{x}^2-2uxy}{2\rho(1-u^2)}},
\end{align*}
where the sum converges pointwise and in $L^2(\gamma)\otimes L^2(\gamma)$, and $\mathcal{M}_u$ is known as the Mehler transform.

Now let $x,y,z\in \Rnum^2$. $A$ is the same as Eq.(\ref{ou}).
The operator $P_t=e^{t A},t\ge 0$ forms an operator semigroup known as the Ornstein-Uhlenbeck semigroup.
It is well known that the semigroup $P_t$ has the following explicit representation \cite{Mg}, due to Kolmogorov:
\begin{equation}\label{eq31}
  P_t \varphi (x)=\frac{1}{\pi\rho(1-e^{-2rt})}\int_{\Rnum^2}\,e^{-\frac{1}{\rho(1-e^{-2rt})}\abs{y}^2}\varphi(e^{tB}x-y)\,\dif y,
\end{equation}
where $B=\begin{bmatrix} -r & \Omega\\ -\Omega & -r  \end{bmatrix}$.
Clearly, $e^{tB}=e^{-rt}\begin{bmatrix} \cos \Omega t & \sin \Omega t\\ -\sin \Omega t & \cos \Omega t  \end{bmatrix}:=e^{-rt}B_0(t) $. Thus, a change of variable yields that the  normal  Mehler formula is
\begin{eqnarray*}
  P_t \varphi (x)&=&\int_{\Rnum^2}\,\varphi(e^{-rt}B_0(t) x+\sqrt{1-e^{-2rt}}z)\, \mu(\dif z)\nonumber \\
  &=& \int_{\Rnum^2}\,\varphi(B_0(t) x\cos\theta +z\sin \theta)\, \mu(\dif z)\\
  &=&P^s_t\varphi( B_0(t) x),\label{gxi}
\end{eqnarray*}
where $\mu$ is as Eq.(\ref{nvmea}), $\cos \theta=e^{-rt}$ with $\theta\in (0,\frac{\pi}{2})$, and $P^s_t$ is the 2-dimensional symmetric Ornstein-Uhlenbeck semigroup associated with the generator $\mathcal{A}_s$. The above equation presents a relation between the symmetric and nonsymmetric (normal) Ornstein-Uhlenbeck semigroup.

Denote $u=e^{-rt},\,\tilde{B}_0(u)=B_0(-\frac{1 }{r}\log u)$. A change of variable of Eq. (\ref{eq31}) yields
\begin{equation}\label{k2}
  \mathcal{M}_u\varphi (x)=P_t \varphi (x)=\int _{\Rnum^2}\,\varphi(y)\frac{1}{1-u^2 }\exp\set{-\frac{u^2\abs{y}^2+u^2\abs{x}^2-2u(\tilde{B}_0(u)x,y)}{\rho(1-u^2)}}\mu(\dif y),
\end{equation}
where $(x,\,y)=x'y$.

Eq.(\ref{eigen}) and Eq.(\ref{eqeq}) imply that the Mehler transform is characterized by \begin{equation*}
   \mathcal{M}_u J_{m,n}=u^{m+n+\mi(m-n)c}J_{m,n}.
 \end{equation*} Since the collection $\set{J_{m,n}}$ is an orthogonal basis in $L_{\Cnum}^2(\mu)$ with $\norm{J_{m,n}}^2=m!n!\rho^{m+n}$, this means that we also have
\begin{equation} \label{k1}
  \mathcal{M}_u\varphi (x)=\int _{\Rnum^2}\,\sum_{m=0}^\infty\sum_{n=0}^{\infty} \frac{u^{m+n+\mi(m-n)c}}{m!n!\rho^{m+n}}J_{m,n}(x)J_{m,n}(y)\,\varphi(y)\mu(\dif y),\quad \varphi\in L_{\Cnum}^2(\mu),
\end{equation}
with the sum $ \sum \sum \frac{u^{m+n+\mi(m-n)c}}{m!n!\rho^{m+n}}J_{m,n}(x)J_{m,n}(y)$ converging in $L_{\Cnum}^2(\mu)\otimes L_{\Cnum}^2(\mu)$ by the Riesz-Fischer Theorem.
Since the kernels in (\ref{k2}) and (\ref{k1}) have to coincide, we have that
\begin{thm}[Mehler summation formula]
  \begin{equation*}\label{k23}
  \sum_{m=0}^\infty\sum_{n=0}^{\infty}\frac{u^{m+n+\mi(m-n)c}}{m!n!\rho^{m+n}}J_{m,n}(x)J_{m,n}(y)=\frac{1}{1-u^2 }\exp\set{-\frac{u^2\abs{y}^2+u^2\abs{x}^2-2u(\tilde{B}_0(u)x,y)}{\rho(1-u^2)}},
\end{equation*}
where the sum converges pointwise and in $L_{\Cnum}^2(\mu)\otimes L_{\Cnum}^2(\mu)$.
\end{thm}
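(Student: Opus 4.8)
The plan is to realise the Mehler transform $\mathcal{M}_u=P_t$ (with $u=e^{-rt}$, $t=-\tfrac1r\log u$) as a Hilbert--Schmidt integral operator on $L^2_{\Cnum}(\mu)$ in two independent ways and then invoke uniqueness of the integrating kernel. The two representations are already assembled in the text: the explicit Gaussian kernel (\ref{k2}) coming from Kolmogorov's formula (\ref{eq31}), and the spectral kernel (\ref{k1}) coming from the eigen-decomposition of $A$. Equating these kernels is exactly the asserted identity, so the argument reduces to (i) producing both kernels rigorously, (ii) checking each defines the same bounded operator, and (iii) justifying that a Hilbert--Schmidt operator determines its kernel uniquely.

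First I would record the eigen-relation. Since $A J_{m,n}=[-(m+n)r-\mi(m-n)\Omega]J_{m,n}$ by Theorem~\ref{thjl}(2) together with (\ref{eqeq}), and $P_t=e^{tA}$, the substitutions $u=e^{-rt}$ and $c=\Omega/r$ give $\mathcal{M}_u J_{m,n}=u^{m+n+\mi(m-n)c}J_{m,n}$. Using that $\set{(m!n!\rho^{m+n})^{-1/2}J_{m,n}}$ is an orthonormal basis (Theorem~\ref{thjl}(1)) and that $\innp{\varphi,J_{m,n}}=\int \varphi\,\overline{J_{m,n}}\,\mu(\dif x\dif y)$ carries a conjugate in the second slot, I would expand an arbitrary $\varphi$ and apply $\mathcal{M}_u$ termwise to obtain the kernel $K_1(x,y)=\sum_{m,n}\frac{u^{m+n+\mi(m-n)c}}{m!n!\rho^{m+n}}J_{m,n}(x)\overline{J_{m,n}(y)}$; the bookkeeping identity $\overline{J_{m,n}}=J_{n,m}$, immediate from (\ref{jal}), puts this into the form of (\ref{k1}). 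Because $|u|<1$ and $c\in\Rnum$ force $|u^{m+n+\mi(m-n)c}|=|u|^{m+n}$, the Hilbert--Schmidt norm is $\sum_{m,n}|u|^{2(m+n)}=(1-u^2)^{-2}<\infty$, so the series converges in $L^2_{\Cnum}(\mu)\otimes L^2_{\Cnum}(\mu)$ and $\mathcal{M}_u$ is Hilbert--Schmidt.

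Next I would treat the analytic kernel. Starting from (\ref{eq31}) and the change of variables already indicated, the operator is $\int K_2(x,y)\varphi(y)\,\mu(\dif y)$ with $K_2$ the explicit exponential in (\ref{k2}), and one checks directly that $K_2\in L^2_{\Cnum}(\mu)\otimes L^2_{\Cnum}(\mu)$ for $|u|<1$. Since $K_1$ and $K_2$ represent one and the same operator $\mathcal{M}_u$, and the kernel of a Hilbert--Schmidt operator is unique up to a $\mu\otimes\mu$-null set, I conclude $K_1=K_2$ in $L^2\otimes L^2$, which is the stated formula together with its $L^2$ convergence.

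Finally, pointwise convergence must be argued separately, and this is the step I expect to be the main obstacle, since $L^2$-equality only gives equality almost everywhere. To upgrade to every $(x,y)$ I would sum the left-hand series in closed form: splitting the double sum along the diagonal index $m-n$ and using the Laguerre representation (\ref{jl}) together with the Laguerre generating function turns each diagonal into a convergent power series in $u$, and reassembling reproduces the exponential on the right; absolute convergence for $|u|<1$, from the $|u|^{m+n}$ bound and the polynomial growth of $J_{m,n}$ on compacta, then legitimises the rearrangement and yields pointwise equality. Alternatively, one can bypass the direct summation by invoking the factorisation $P_t\varphi(x)=P^s_t\varphi(B_0(t)x)$ established earlier, which reduces the complex Mehler kernel to the known real two-dimensional Mehler kernel precomposed with the rotation $B_0(t)$, after the change of basis between $H_k(x,\tfrac\rho2)H_{l-k}(y,\tfrac\rho2)$ and $J_{m,n}$ recorded at the end of Section~\ref{sec2}.
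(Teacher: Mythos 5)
Your proposal is correct and follows essentially the same route as the paper: it equates the explicit Gaussian kernel (\ref{k2}) with the spectral kernel (\ref{k1}) via uniqueness of the Hilbert--Schmidt kernel of $\mathcal{M}_u$, and then treats pointwise convergence separately (the paper does this exactly as in the proof of (\ref{gene}), i.e.\ via Szeg\H{o}'s uniform Laguerre estimate, which is the cleaner of your two suggested routes since ``polynomial growth on compacta'' alone does not control the sum). Your observation that the spectral kernel naturally carries $\overline{J_{m,n}(y)}=J_{n,m}(y)$ rather than $J_{m,n}(y)$ is a genuine point that the paper's displayed formula glosses over, and it is worth retaining.
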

The pointwise convergent can be showed by the same argument as in the proof of Eq.(\ref{gene}).
\section{The processes are decomposed of some 1-dimensional complex-valued processes}\label{sec31}
In this section, suppose that $\tensor{C}$ is a normal $n$-by-$n$ matrix and the Ornstein-Uhlenbeck processes satisfy
\begin{equation*}
  \dif \vec{X}(t) = \tensor{C}\vec{X}(t)\dif t +  \sqrt{2\sigma^2} \dif \vec{B}(t).
\end{equation*}
Its generator is
\begin{equation}\label{gene3}
   {\mathcal{A}} = \sigma^2 \Delta_ {\vec{x}} + (\tensor{C}\vec{x})\cdot \nabla_ {\vec{x}}.
\end{equation}
Clearly, it is an $n$-dimensional normal Ornstein-Uhlenbeck operator.

As in \cite{lun,Mg}, we do linear transform according to the canonical form of the normal matrix $\tensor{C}$.
Since $\tensor{C}$ is normal, there is a real orthogonal matrix $\tensor{Q}$ such that
\begin{equation*}
 \tensor{Q}'\tensor{C}\tensor{Q}=\diag\set{A_1,\,A_2,\,\cdots,A_l },
\end{equation*}
where each $ A_j$ is either a real 1-by-1 matrix or is a real 2-by-2 matrix of the form
\begin{equation*}
   A_j= \begin{bmatrix}   \alpha_j & \beta_j \\ - \beta_j & \alpha_j  \end{bmatrix},
\end{equation*}
where $\alpha_j \pm \mi \beta_j,\,\beta_j\neq 0$ is a pair of conjugate complex eigenvalues of $\tensor{C}$ \cite[p105]{hj}. That is to say, its canonical form is a block diagonal matrix. If $\tensor{C}$ is not symmetric then there is at least one $ A_j$ which is a 2-by-2 matrix. Let $\vec{Y}(t)=\tensor{Q}' \vec{X}_t,\,\vec{y}=\tensor{Q}' \vec{x}$, then $\vec{Y}(t)$ has that
\begin{equation*}
  \dif \vec{Y}(t) = \diag\set{A_1,\,A_2,\,\cdots,A_l }\vec{Y}(t)\dif t +  \sqrt{2\sigma^2} \dif \vec{B}(t),
\end{equation*}
which is called the canonical form of OU processes in the present paper.
Its generator is
\begin{equation*}
 \tilde{\mathcal{A}}=\sigma^2 \Delta_{ \vec{y}}+ (\diag\set{A_1,\,A_2,\,\cdots,A_l } \vec{y})\cdot \nabla_ {\vec{y}}.
 \end{equation*}

The eigenfunctions of the 2-dimension normal operator (\ref{ou.op}) yield the eigenfunctions of the operator $\tilde{\mathcal{A}}$ \cite[p51]{RdSm}. Note that the eigenfunctions of the operator $ {\mathcal{A}} $ and $\tilde{\mathcal{A}}$ are same up to an orthogonal transform (i.e.,  $ \mathcal{A}f(\vec{x})=\tilde{\mathcal{A}}g(\vec{y})$ when $f(\vec{x})=g(\tensor{Q}'\vec{x})$), thus we can get the eigenfunctions of the operator $ {\mathcal{A}} $.
\begin{exmp}
The following equation describes the coupling diffusion on the lattice of the circle.
\begin{equation*}
  \dif \vec{X}(t)=\tensor{C}  \vec{X}(t)dt-r  \vec{X}(t)dt+\sqrt{2\sigma^2} \dif \vec{B}(t),
\end{equation*}
where $\tensor{C}$ is an $n$-by-$n$ ($n\ge 3$) tridiagonal matrix
\begin{eqnarray*}
\tensor{C}=\left[
\begin{array}{lllllc}
-(a+b) & a  & 0 &\cdots & b\\
     b  & -(a+b) & a &\cdots & 0\\
     \vdots &\vdots &\vdots &\vdots &\vdots\\
     0  & 0  & \cdots & -(a+b) & a\\
     a  & 0  & \cdots & b  & -(a+b) \
\end{array}
\right ].
\end{eqnarray*}
\end{exmp}
Denote by $\omega_j=\exp{(\mi\frac{2j\pi}{n})}$ the $j$-th unit root of $1$,
and $\omega^*_j$ is the complex conjugate of $\omega_j$. Clearly, the eigenvectors of $\tensor{C}$ are
$$\vec{\varphi}_{j}=\frac{1}{\sqrt{n}}(1,\omega_j,\,\omega_j^2,\cdots, \omega_j^{n-1})',\qquad j=0,1,2,\dots,n-1$$
and $\tensor{C}$ is a normal matrix.
 Let $L_k={\rm span}\set{\RE(\vec{\varphi}_{k}),\IM(\vec{\varphi}_{k})},\quad k=0, 1,2,\dots, [\frac{n}{2}] $ and denote by $P_{L_k}$ the project operator. Then there is a resolution of the identity $Id=\sum^{[\frac{n}{2}]}_{k=0} P_{L_k}.$
Thus
\begin{equation*}
   \vec{X}(t)=\sum^{[\frac{n}{2}]}_{k=0} P_{L_k}\vec{X}(t) =\sum^{[\frac{n}{2}]}_{k=0}( Y_k(t)\RE(\vec{\varphi}_{k}) +Z_k(t)\IM(\vec{\varphi}_{k})),
\end{equation*}
where $(Y_k(t),\,Z_k(t))$ satisfies the equation
\begin{align*}
\begin{bmatrix}
\dif Y_k(t)\\ \dif Z_k(t) \end{bmatrix}& =\begin{bmatrix} -r +  (a+b)(\cos\frac{2k\pi}{n}-1) & (a-b)\sin\frac{2k\pi}{n} \\ (b-a)\sin\frac{2k\pi}{n} & -r +(a+b)(\cos\frac{2k\pi}{n}-1) \end{bmatrix}
\begin{bmatrix} Y_k(t)\\ Z_k(t) \end{bmatrix} \dif t + \sqrt{2\sigma^2}
\begin{bmatrix} \dif B_1(t)\\ \dif B_2(t) \end{bmatrix} .
\end{align*}


\section{Appendix}
\subsection{Hermite polynomials and Laguerre polynomials}
To self-contained, we list the well-known results of the Hermite polynomials and the Laguerre polynomials, for which the reader can refer to the References~\cite{ch,guo,Le,wg}.
\begin{dfn}
  The Hermite polynomials are defined by the formula  $$H_n(x,\rho)=(-\rho)^n e^{x^2/2\rho}\frac{\dif^n}{\dif x^n}e^{-x^2/2\rho},\,n=1,2,\dots.$$
Clearly, it has power series expression,
\begin{equation*}
  H_n(x,\,\rho)= \sum\limits^{[n/2]}_{k=0}{n \choose 2k}(2k-1)!! x^{n-k}(-\rho)^k.
\end{equation*}
For simplicity of the symbols, we also denote it by $H_n(x)$.
\end{dfn}
\begin{prop}\label{lemm0}
 The Hermite polynomials $H_n(x,\rho)$ satisfy:
 \begin{itemize}
     \item[\textup{1)}] Partial derivatives: $
       \frac{\dif}{\dif x}H_n(x,\rho)=n H_{n-1}(x,\rho).$
     \item[\textup{2)}] Recursion formula:
$H_{n+1}(x,\rho)=x H_n(x,\rho)-n\rho H_{n-1}(x,\rho).$
 \item[\textup{3)}] Orthogonality: $\frac{1}{\sqrt{2\pi \rho}}\int_{-\infty}^{+\infty} e^{-\frac{x^2}{2\rho}}H_n(x,\rho)H_m(x,\,\rho)\,\mathrm{d}x=n!\rho^{n}\delta_{nm}.
$
     \item[\textup{4)}] Integral representation:
     \begin{equation}\label{hnin}
  H_n(x,\rho)=\frac{(-\mi )^n}{\sqrt{2\pi\rho}}\int_{-\infty}^{\infty} t^n\,e^{-\frac{1}{2\rho}(t-\mi x)^2}\dif t,\quad n=0,1,2,\cdots.
\end{equation}
     \end{itemize}
 \end{prop}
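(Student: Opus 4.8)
The plan is to route all four identities through a single object, the generating function $G(x,t):=\exp\set{xt-\tfrac{\rho}{2}t^2}$, for which I claim
\begin{equation*}
  G(x,t)=\sum_{n=0}^{\infty}\frac{t^n}{n!}H_n(x,\rho).
\end{equation*}
Once this is in hand, properties 1)--4) reduce to elementary manipulations of $G$ together with Gaussian integration, so the only genuine work is establishing the generating function and, for 4), justifying a contour translation.

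To obtain the generating function, set $g(x)=e^{-x^2/2\rho}$ and use the shift identity $\partial_s e^{-(x-s)^2/2\rho}=-\partial_x e^{-(x-s)^2/2\rho}$, so that the Taylor expansion of $s\mapsto e^{-(x-s)^2/2\rho}$ at $s=0$ reads $e^{-(x-s)^2/2\rho}=\sum_n \frac{(-s)^n}{n!}g^{(n)}(x)$. Multiplying by $e^{x^2/2\rho}$ and recognizing $(-\rho)^n e^{x^2/2\rho}g^{(n)}(x)=H_n(x,\rho)$ from the Rodrigues-type definition yields $e^{xs/\rho-s^2/2\rho}=\sum_n \frac{s^n}{n!\rho^n}H_n(x,\rho)$; the substitution $s=\rho t$ gives $G$. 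Expanding $e^{xt}$ and $e^{-\rho t^2/2}$ and collecting powers of $t$ simultaneously reproduces the power-series formula quoted in the definition.

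For 1) and 2) I would differentiate $G$. Since $\partial_x G=tG$, comparing the coefficient of $t^n/n!$ gives $\partial_x H_n=nH_{n-1}$; since $\partial_t G=(x-\rho t)G$, the same bookkeeping gives $H_{n+1}=xH_n-n\rho H_{n-1}$. (Both also fall out of the Rodrigues formula directly: differentiating $g'=-\tfrac{x}{\rho}g$ a total of $n$ times yields $\rho g^{(n+1)}+xg^{(n)}+ng^{(n-1)}=0$, which after multiplication by $(-\rho)^n e^{x^2/2\rho}$ is exactly the recursion, and the derivative formula then follows.) For 3) I would multiply two copies, integrate against the weight, and complete the square,
\begin{equation*}
  \frac{1}{\sqrt{2\pi\rho}}\int_{\Rnum}e^{-x^2/2\rho}G(x,s)G(x,t)\,\dif x=e^{\rho st},
\end{equation*}
the Gaussian normalization being $1$; expanding $e^{\rho st}=\sum_n \rho^n s^n t^n/n!$ and comparing coefficients of $s^m t^n$ against $\sum_{m,n}\frac{s^m t^n}{m!n!}\innp{H_m,H_n}$ forces $\innp{H_m,H_n}=n!\rho^n\delta_{mn}$.

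For 4) I would show that the right-hand side carries the same generating function. Writing $I_n$ for the proposed integral, forming $\sum_n \frac{u^n}{n!}I_n$ and interchanging sum and integral gives $\frac{1}{\sqrt{2\pi\rho}}\int e^{-(t-\mi x)^2/2\rho}e^{-\mi ut}\,\dif t$; the substitution $t\mapsto t+\mi x$ (a shift of the contour back onto the real axis) produces $e^{ux}$ times the Gaussian characteristic function $e^{-\rho u^2/2}$, i.e. exactly $G(x,u)$, whence $I_n=H_n$. The main obstacle is precisely this last step: the interchange of summation and integration and the translation of the contour by $\mi x$ must be justified, which I would handle by Cauchy's theorem together with the rapid decay of the Gaussian factor (uniform on horizontal strips), after which dominated convergence legitimizes the term-by-term integration. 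Everything else is routine coefficient comparison.
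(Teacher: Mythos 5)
Your proof is correct, but there is nothing in the paper to compare it against: the paper states Proposition~\ref{lemm0} as a list of classical facts ``to be self-contained'' and simply cites Courant--Hilbert, Lebedev, et al.\ without proof. Your generating-function route is the standard one and all four items check out: the Taylor expansion of $s\mapsto e^{-(x-s)^2/2\rho}$ combined with the Rodrigues definition does give $e^{xt-\rho t^2/2}=\sum_n \frac{t^n}{n!}H_n(x,\rho)$; differentiating in $x$ and in $t$ yields 1) and 2); the completed square $e^{\rho st}$ yields 3); and the contour shift $t\mapsto t+\mi x$, legitimate by Cauchy's theorem and the Gaussian decay on horizontal strips, identifies the integral in 4) with $G(x,u)$. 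Two small points you should make explicit if you write this out in full: in 3) the term-by-term integration of the product of two generating functions against the Gaussian weight (i.e.\ the interchange of the double sum with the integral before comparing coefficients of $s^mt^n$) needs the same Fubini/dominated-convergence justification you supply for 4) --- it holds because $\sum_{m,n}\frac{|s|^m|t|^n}{m!n!}|H_m||H_n|e^{-x^2/2\rho}$ is integrable; and coefficient comparison in a power-series identity valid for all $s,t$ should be invoked explicitly (uniqueness of Taylor coefficients). Incidentally, your expansion of $e^{xt}e^{-\rho t^2/2}$ gives the exponent $x^{n-2k}$ in the power-series formula, whereas the paper's displayed definition reads $x^{n-k}$ --- that is a typo in the paper, and your version is the correct one.
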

\begin{dfn}[Bessel's integrals]
 The definition of the Bessel function of the first kind, for integer values of $n$, is possible using an integral representation:
 \begin{equation}\label{bess}
  \mathrm{J}_n (x)  = \frac{1}{2 \pi} \int_{-\pi}^\pi e^{-\mathrm{i}\,(n \tau - x \sin \tau)} \,\mathrm{d}\tau.
 \end{equation}
\end{dfn}
\begin{dfn}\label{lgldf}
  The Laguerre polynomials $L_n^{\alpha}(x,\,\rho)$ is defined by the formula \cite {Le}
  \begin{equation}
 L_n^{\alpha}(x,\,\rho)=\frac{\rho^n}{ n!}x^{-\alpha} e^{\frac{x}{\rho}} {\dif ^n \over \dif x^n} \left(e^{-\frac{x}{\rho}} x^{n+\alpha}\right),\quad n=1,2,\dots
  \end{equation}
for arbitrary real $\alpha>-1$. Clearly, it has power series expression,
\begin{equation}\label{powersr}
  L_n^{\alpha}(x,\,\rho)= \frac{(-1)^n}{n!}\sum\limits^{n}_{r=0}(-1)^r r!{n+\alpha \choose r}{n \choose r}x^{n-r}\rho^r.
\end{equation}
\end{dfn}
\begin{prop}\label{lagu}
   The Laguerre polynomials $L_n^{\alpha}(x,\,\rho)$ satisfy:
     \begin{itemize}
     \item[\textup{1)}] Partial derivatives: $ \frac{\partial }{\partial x}L_n^{\alpha}(x,\,\rho)= -L_{n-1}^{\alpha+1}(x,\,\rho).      $
     \item[\textup{2)}] Recursion formula:
     \begin{eqnarray}
       L_n^{\alpha}(x,\,\rho)&=&L_n^{\alpha+1}(x,\,\rho)-\rho L_{n-1}^{\alpha+1}(x,\,\rho),\label{recur1}\\
       x \frac{\partial }{\partial x}L_n^{\alpha}(x,\,\rho)&=&n L_n^{\alpha}(x,\,\rho)-(n+\alpha)\rho L_{n-1}^{\alpha}(x,\,\rho).\label{recur2}
     \end{eqnarray}
     \item[\textup{3)}] Orthogonality: $\int_0^{+\infty}x^\alpha e^{-\frac{x}{\rho}}L^\alpha_m(x,\,\rho)L^\alpha_n(x,\,\rho)\,\mathrm{d}x=\rho^{m+n+\alpha+1}\frac{\Gamma(\alpha+n+1)}{n!}\delta_{nm}.
$
     \item[\textup{4)}] Integral representation:
     \begin{equation}\label{lgr}
  L_n^{\alpha}(x,\,\rho)=\frac{ e^{\frac{x}{\rho}} x^{-\alpha/2}}{ n!\rho }\int_{0}^{\infty} t^{n+\frac{1}{2}\alpha}\mathrm{J}_{\alpha}(\frac{2}{\rho}\sqrt{xt})e^{-\frac{t}{\rho}} \,\dif t,\quad \alpha>-1,\, n=0,1,2,\cdots,
\end{equation}
where $\mathrm{J}_{v}(x)$ is the Bessel function of order $v$.
     \end{itemize}
 \end{prop}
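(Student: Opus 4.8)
The plan is to prove the four items separately, using the explicit power series \eqref{powersr} as the main tool for the algebraic identities 1)--2) and the Rodrigues formula of Definition~\ref{lgldf} for the analytic statements 3)--4). A convenient unifying device is the scaling relation $L_n^{\alpha}(x,\rho)=\rho^{n}L_n^{\alpha}(x/\rho,1)$, which follows immediately from Definition~\ref{lgldf} via the substitution $x=\rho u$; it reduces every assertion to the classical case $\rho=1$ while keeping track of the powers of $\rho$.

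For item 1) I would differentiate \eqref{powersr} term by term. The constant ($r=n$) term vanishes, and comparing the coefficient of $x^{n-1-r}\rho^{r}$ with that of $L_{n-1}^{\alpha+1}(x,\rho)$ (whose upper parameter is $(n-1)+(\alpha+1)=n+\alpha$) the claim reduces to $\frac{1}{n!}\binom{n}{r}(n-r)=\frac{1}{(n-1)!}\binom{n-1}{r}$, both sides being $\frac{1}{r!(n-1-r)!}$. For the first recursion \eqref{recur1} I would expand $L_n^{\alpha+1}(x,\rho)$ by \eqref{powersr} and split its binomial coefficient through Pascal's rule $\binom{n+\alpha+1}{r}=\binom{n+\alpha}{r}+\binom{n+\alpha}{r-1}$, valid as a polynomial identity in the real parameter $n+\alpha$; the first half reproduces $L_n^{\alpha}(x,\rho)$ and, after the shift $r\mapsto r+1$ and the factorial identities $j!\binom{N}{j}=N!/(N-j)!$, the second half matches $\rho L_{n-1}^{\alpha+1}(x,\rho)$. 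The second recursion \eqref{recur2} comes out of \eqref{powersr} directly: $nL_n^{\alpha}-x\,\partial_x L_n^{\alpha}$ leaves a factor $r$ inside the sum, killing the $r=0$ term, and the absorption identities $\binom{n+\alpha}{r}=\frac{n+\alpha}{r}\binom{n+\alpha-1}{r-1}$ and $\binom{n}{r}=\frac{n}{r}\binom{n-1}{r-1}$ then identify the remainder with $(n+\alpha)\rho L_{n-1}^{\alpha}(x,\rho)$.

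For item 3) I would assume $m\le n$ without loss and substitute the Rodrigues formula of Definition~\ref{lgldf} for $L_n^{\alpha}$, then integrate by parts $n$ times against the polynomial $L_m^{\alpha}$. The boundary terms vanish since $e^{-x/\rho}x^{n+\alpha}$ and all its derivatives decay at $+\infty$ and vanish at $0$ (this is where $\alpha>-1$ enters). As $\deg L_m^{\alpha}=m$, the surviving integrand is identically zero when $m<n$, which gives orthogonality; when $m=n$ the leading coefficient $(-1)^n/n!$ read off from \eqref{powersr} yields $\frac{\dif^n}{\dif x^n}L_n^{\alpha}=(-1)^n$, and the residual integral $\int_0^{\infty}e^{-x/\rho}x^{n+\alpha}\,\dif x=\rho^{\,n+\alpha+1}\Gamma(n+\alpha+1)$ produces the normalizing constant $\rho^{\,m+n+\alpha+1}\Gamma(\alpha+n+1)/n!$.

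For item 4) I would insert the defining power series $\mathrm J_\alpha(z)=\sum_{k\ge0}\frac{(-1)^k}{k!\,\Gamma(k+\alpha+1)}(z/2)^{2k+\alpha}$ (which agrees with \eqref{bess} for integer order) at $z=\frac{2}{\rho}\sqrt{xt}$ and integrate term by term, the interchange being justified by absolute convergence of the resulting series of Gamma integrals $\int_0^{\infty}t^{\,n+k+\alpha}e^{-t/\rho}\,\dif t=\rho^{\,n+k+\alpha+1}\Gamma(n+k+\alpha+1)$. Collecting the prefactor $e^{x/\rho}x^{-\alpha/2}/(n!\rho)$ turns the right-hand side into $\frac{\rho^{n}(\alpha+1)_n}{n!}\,e^{x/\rho}\,{}_1F_1\!\big(\alpha+1+n;\alpha+1;-x/\rho\big)$. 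The decisive step is to recognize this confluent hypergeometric series and apply Kummer's transformation ${}_1F_1(a;b;w)=e^{w}\,{}_1F_1(b-a;b;-w)$, which converts it into the terminating series $\frac{\rho^{n}(\alpha+1)_n}{n!}\,{}_1F_1(-n;\alpha+1;x/\rho)$; a coefficient comparison with \eqref{powersr} (using $(-n)_k=(-1)^k n!/(n-k)!$ and the scaling relation) then closes the proof. I expect item 4) to be the principal obstacle, both for the justification of the term-by-term integration and because the passage through Kummer's transformation is the one point that is not a routine matching of coefficients.
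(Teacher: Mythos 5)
The paper does not prove Proposition~\ref{lagu} at all: it appears in the appendix as a list of well-known facts stated ``to [be] self-contained,'' with the reader referred to the textbooks \cite{ch,guo,Le,wg}. There is therefore no in-paper argument to compare against, and your proposal has to be judged on its own merits; it is a correct and essentially complete self-contained derivation. Items 1) and 2) by coefficient comparison in \eqref{powersr} check out: the identity $\frac{1}{n!}\binom{n}{r}(n-r)=\frac{1}{(n-1)!}\binom{n-1}{r}$, Pascal's rule for the generalized binomial $\binom{n+\alpha+1}{r}=\binom{n+\alpha}{r}+\binom{n+\alpha}{r-1}$ read as a polynomial identity in the upper argument, and the absorption identities are all applied correctly, and the upper parameters match because $(n-1)+(\alpha+1)=n+\alpha$. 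Item 3) is the standard Rodrigues-plus-integration-by-parts argument; the only imprecision is that ``all its derivatives vanish at $0$'' should read ``its derivatives of order $<n$'', which are exactly the ones occurring as boundary terms and which carry a factor $x^{\alpha+j}$ with $j\ge 1$, so that $\alpha>-1$ indeed suffices. Item 4) is also correct, including the passage through Kummer's transformation: term-by-term integration (justified by Tonelli, since $\Gamma(n+k+\alpha+1)/(k!\,\Gamma(k+\alpha+1))$ grows only polynomially faster than $1/k!$) yields $\frac{\rho^{n}(\alpha+1)_n}{n!}e^{x/\rho}\,{}_1F_1(n+\alpha+1;\alpha+1;-x/\rho)$, Kummer converts this to $\frac{\rho^{n}(\alpha+1)_n}{n!}\,{}_1F_1(-n;\alpha+1;x/\rho)$, and this matches \eqref{powersr} via the scaling $L_n^{\alpha}(x,\rho)=\rho^{n}L_n^{\alpha}(x/\rho,1)$. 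One small point worth flagging explicitly: the paper's \eqref{bess} defines $\mathrm{J}_n$ only for integer order, whereas \eqref{lgr} is asserted for all real $\alpha>-1$, so your recourse to the power-series definition of $\mathrm{J}_\alpha$ is not optional but necessary (in the paper itself only integer $\alpha=m-n$ is ever used, in the proof of Theorem~\ref{dingl2}).
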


\subsection{Proof of Theorem~\ref{dingl2}}\label{ssec5.1}
\noindent{\it Proof of Proposition~\ref{pop1}.\,}
It follows form the recursion relation of Hermite polynomial (see Proposition~\ref{lemm0}) that
\begin{align*}
&  \frac{\mi}{\Omega}\calJ [H_k(x)H_{l-k}(y)]
= y H_{l-k}(y)\frac{\dif}{\dif x}H_k(x)-x H_k(x)\frac{\dif}{\dif y}H_{l-k}(y)\\
&= y H_{l-k}(y)k H_{k-1}(x)-x H_k(x)(l-k)H_{l-k-1}(y)\\
&= k H_{k-1}(x)H_{l-k+1}(y)-(l-k)H_{k+1}(x)H_{l-k-1}(y).
\end{align*}
Since $ \frac{\mi}{\Omega} \calJ J_{m,n}(x,y)= \lambda  J_{m,n}(x,y)$ and $J_{m,n}(x,y)=\sum\limits^l_{k=0}\beta_k H_k(x)H_{l-k}(y)$, we obtain that
\begin{eqnarray*}
  \frac{\mi}{\Omega} \calJ [\sum\limits^l_{k=0}\beta_k H_k(x)H_{l-k}(y)]
&=& \sum\limits^l_{k=0} \beta_k [k H_{k-1}(x)H_{l-k+1}(y)-(l-k)H_{k+1}(x)H_{l-k-1}(y)]\\
&=&  \lambda \sum\limits^l_{k=0}\beta_k H_k(x)H_{l-k}(y).
\end{eqnarray*}
Let $\beta_{-1}=\beta_{l+1}=0$. Since $H_k(x,\rho)H_{m-k}(y,\rho)$ are orthogonal, we have that
\begin{equation}\label{eq84}
-(l-(k-1))\beta_{k-1}-\lambda\beta_{k}+(k+1)\beta_{k+1}=0.
\end{equation}
It is exact the linear equation (\ref{linea}).

It follows from Problem~373 and Problem~399 of Reference~\cite{pro} that
\begin{align*}
\det\set{\tensor{M}(\lambda)}  &= \left|
\begin{array}{llllllll}
     -\lambda & \mi &0    & 0   & \dots  & 0 & 0& 0\\
     l\mi & -\lambda &2\mi  & 0   & \dots  & 0 & 0& 0\\
     0   & (l-1)\mi & -\lambda &3 \mi & \dots  & 0 & 0& 0\\
     \hdotsfor{8}\\
     0   & 0   & 0   &0    & \dots  &2\mi &-\lambda  & l\mi\\
     0   & 0   & 0   &0    & \dots  &0 &\mi  & -\lambda
\end{array}
\right| \\
&= \mi^{l+1}  \left|
\begin{array}{llllllll}
     \lambda\, \mi & 1 &0    & 0   & \dots  & 0 & 0& 0\\
     l & \lambda\, \mi &2  & 0   & \dots  & 0 & 0& 0\\
     0   & l-1 & \lambda\, \mi &3  & \dots  & 0 & 0& 0\\
     \hdotsfor{8}\\
     0   & 0   & 0   &0    & \dots  &2 &\lambda\, \mi  & l\\
     0   & 0   & 0   &0    & \dots  &0 &1  & \lambda\, \mi
\end{array}
\right| \\
&= \mi^{l+1}\prod_{m=0}^{l}(\lambda\, \mi +l-2m) =(-1)^{l+1}\prod_{m=0}^{l}(\lambda+ (2m-l)\mi)\\
&=(-1)^{l+1}\prod_{m=0}^{l}(\lambda+ (m-n)\mi).
\end{align*}
Thus when $\lambda=- (m-n)\mi$, Eq.(\ref{linea}) has a nonzero solution\footnote{In fact, we present an alternative way to get Eq.(\ref{spectm}), i.e., the spectrum of the operator $A$.}.

Set $I(\theta)=\sum\limits^l_{k=0}\beta_{k} \cos ^{k}\theta \sin^{l-k}\theta$. Differentiating $I(\theta)$ yields that
\begin{eqnarray*}
  I'(\theta)
  &=&\sum\limits^l_{k=0}\beta_{k} [-k\cos ^{k-1}\theta \sin^{l-k+1}\theta +(l-k)\cos ^{k+1}\theta \sin^{l-k-1}\theta]\\
  &=& - \sum\limits^l_{k=0} [(k+1)\beta_{k+1} -(l-(k-1))\beta_{k-1}] \cos ^{k}\theta \sin^{l-k}\theta\\
  &=&(m-n)\mi\sum\limits^l_{k=0}\beta_{k} \cos ^{k}\theta \sin^{l-k}\theta\quad (\text{by Eq.(\ref{eq84})})\\
  &=&(m-n) \mi I(\theta).
\end{eqnarray*}
Then we have that
\begin{equation}\label{liying}
  I(\theta)=\sum\limits^l_{k=0}\beta_{k} \cos ^{k}\theta \sin^{l-k}\theta=c e^{\mi (m-n)\theta},
\end{equation}
where $c$ is a constant.
Eq.(\ref{liying}) is equivalent to Eq.(\ref{itheta2}) by means of polar coordinate transformation.
{\hfill\large{$\Box$}}

\noindent{\it Proof of Theorem \ref{dingl2}.\,}
We need only to show the case $m\ge n.$
 \begin{align*}
   J_{m,n}(x,y)
    &= \sum\limits^l_{k=0}\beta_{k} H_k(x,\frac{\rho}{2})H_{m-k}(y,\frac{\rho}{2})\\
    &=\sum\limits^l_{k=0}\beta_{k} [ \frac{(-\mi)^l}{\pi\rho}\iint_{\Rnum^2} t^k  s^{l-k}e^{-\frac{1}{\rho}(t-\mi x)^2} e^{-\frac{1}{\rho}(s-\mi y)^2}\,\dif t \dif s ]\quad (\text{by ( \ref{hnin})})\\
    &= \frac{(-\mi)^l}{\pi\rho}c  \iint_{\Rnum^2} e^{-\frac{1}{\rho}(t^2+s^2)+ \frac{2\mi}{\rho} (tx+sy)+\frac{1}{\rho}(x^2+y^2)} (t+\mi s)^{m-n}(t^2+s^2)^n\, \dif t \dif s \quad (\text{by (\ref{itheta2})})\\
    & \quad ( \text{let $c=1,\,t=r\cos \theta,\,s=r\sin\theta,\,\sin\alpha=x/\sqrt{x^2+y^2},\,\cos\alpha = -y/\sqrt{x^2+y^2}$} ) \\
    &= \frac{(-\mi)^l}{\pi\rho} e^{\frac{1}{\rho} (x^2+y^2)} \int^{\infty}_0 e^{-\frac{1}{\rho} r^2}r^{l} r\,\dif r \int^{\pi}_{-\pi} e^{\mi \frac{2r}{\rho} \sqrt{x^2+y^2}\sin(\alpha-\theta)}e^{\mi(m-n)\theta} \,\dif \theta\\
    & \quad ( \text{let $ \gamma={r^2},\, \tau =\alpha-\theta$})\\
    &= (-\mi)^l  e^{\mi(m-n)\alpha }\frac{e^{\frac{1}{\rho} (x^2+y^2)}}{\rho} \int^{\infty}_0 e^{-\frac{\gamma}{\rho}}\gamma ^{l/2} \,\dif \gamma \frac{1}{2\pi}\int^{\pi}_{-\pi}e^{-\mi[(m-n)\tau - \frac{2}{\rho} \sqrt{(x^2+y^2)\gamma}\sin\tau] } \,\dif \tau\\
    &\quad (\text{by the periodicity of triangle function})\\
    &= (-\mi)^l e^{\mi(m-n)\alpha }\frac{e^{\frac{1}{\rho} (x^2+y^2)}}{\rho}  \int^{\infty}_0 \gamma ^{n+\frac{m-n}{2}}\mathrm{J}_{m-n} \big(\frac{2}{\rho} \sqrt{(x^2+y^2)\gamma}\big) e^{-\frac{\gamma}{\rho}}\,\dif \gamma\quad (\text{by (\ref{bess})})\\
    &= (-\mi)^l e^{\mi(m-n)\alpha } n!({x^2+y^2})^{\frac{m-n}{2}} L^{m-n}_{n}(x^2+y^2,\,\rho) \quad (\text{by (\ref{lgr})})\\
    &= (-1)^n  n! (x+\mi y)^{m-n}L^{m-n}_{n}(x^2+y^2,\,\rho).
  \end{align*}
{\hfill\large{$\Box$}}
\subsection{Proof of Theorem~\ref{thjl}}\label{ssec5.2}
\begin{prop}
  \begin{align}
    \partial J_{m,n}&=m J_{m-1,n},\quad  \bar{\partial} J_{m,n}=n J_{m,n-1}.\label{pt}
  \end{align}
\end{prop}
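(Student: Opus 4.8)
The plan is to reduce everything to the ladder-operator algebra generated by the four operators $\partial,\bar{\partial},\partial^*,\bar{\partial}^*$, exactly as in the real harmonic-oscillator calculation. The first step is to record the fundamental commutation relations. Using the explicit formulas of Lemma~\ref{lmm1} together with $\partial 1=\bar{\partial}1=0$ and the fact that $\frac{\partial z}{\partial\bar z}=\frac{\partial\bar z}{\partial z}=0$, a one-line computation on a test function $\phi$ gives
\[
  [\partial,\partial^*]\phi=\partial\partial^*\phi-\partial^*\partial\phi=\tfrac1\rho\phi,\qquad [\bar{\partial},\bar{\partial}^*]=\tfrac1\rho I,
\]
while the cross commutators vanish, $[\partial,\bar{\partial}^*]=[\bar{\partial},\partial^*]=0$. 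The second step notes that since $[\partial,\partial^*]=\frac1\rho I$ is a scalar it is central, so an immediate induction on $m$ yields the ladder identity $[\partial,(\partial^*)^m]=\frac{m}{\rho}(\partial^*)^{m-1}$.

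With these in hand I would simply push $\partial$ through the definition $J_{m,n}=\rho^{m+n}(\partial^*)^m(\bar{\partial}^*)^n1$ from Definition~\ref{ijldf}. Writing
\[
  \partial(\partial^*)^m(\bar{\partial}^*)^n1=(\partial^*)^m\partial(\bar{\partial}^*)^n1+\tfrac{m}{\rho}(\partial^*)^{m-1}(\bar{\partial}^*)^n1,
\]
the first term is killed because $\partial$ commutes with $\bar{\partial}^*$ and $\partial1=0$, so that $(\partial^*)^m\partial(\bar{\partial}^*)^n1=(\partial^*)^m(\bar{\partial}^*)^n\partial1=0$. Multiplying the surviving term by $\rho^{m+n}$ and matching the power of $\rho$ against $J_{m-1,n}=\rho^{m+n-1}(\partial^*)^{m-1}(\bar{\partial}^*)^n1$ gives $\partial J_{m,n}=mJ_{m-1,n}$. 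The identity $\bar{\partial}J_{m,n}=nJ_{m,n-1}$ then follows by the symmetric argument, interchanging the barred and unbarred operators throughout.

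The only genuinely delicate point is the verification of the commutators, and in particular the vanishing of the cross commutator $[\partial,\bar{\partial}^*]=0$; this is exactly what makes the first term disappear and produces the clean factor $m$ rather than a mixture of $J_{m-1,n}$ with lower-order terms. Everything else is bookkeeping of the powers of $\rho$. If one prefers to start from the closed form, the same proposition can be checked by differentiating the expression of Theorem~\ref{ijl} termwise: the elementary identity $\binom{m}{r}(m-r)=m\binom{m-1}{r}$ reindexes $\partial J_{m,n}$ into $mJ_{m-1,n}$, the top term $r=m$ (in the case $m\le n$) dropping out automatically since its coefficient carries the factor $m-r=0$.
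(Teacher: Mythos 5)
Your main argument is correct and is essentially the paper's own proof: both rest on the canonical commutation relation $[\partial,\partial^*]=\tfrac1\rho$ (the paper inducts on $m$ one step at a time and disposes of the $\bar\partial$ identity via the conjugation symmetry $J_{m,n}=\overline{J_{n,m}}$, while you package the same induction into the iterated commutator $[\partial,(\partial^*)^m]=\tfrac{m}{\rho}(\partial^*)^{m-1}$ and check the vanishing cross commutators explicitly). Only your optional second route via the closed form of Theorem~\ref{ijl} should be avoided in this paper's logical order, since that theorem is itself proved using the present proposition.
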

\begin{proof}
Since $\overline{\partial \phi}=\bar{\partial}\bar{\phi}$, we have $ J_{m,n}(z,\,\rho)=\overline{J_{n,m}(z,\,\rho)}$. Thus we need only to prove that
\begin{equation}\label{ee1}
  \partial J_{m,n}=m J_{m-1,n}.
\end{equation}
When $m=0$, both sides of Eq.(\ref{ee1}) equal to $0$. We proceed by induction on $m$.  Assume that Eq.(\ref{ee1}) is true for $n<k$. Then it follows from
the commutation relation $  \partial  \partial^* -  \partial^*\partial=\frac{1}{\rho}$
that
\begin{align*}
   \partial J_{k,n}&=\partial (\rho \partial^*J_{k-1,n})=\rho[\partial^* \partial J_{k-1,n}+\frac{1}{\rho} J_{k-1,n}]\\
   &= \rho \partial^* (k-1) J_{k-2,n} + J_{k-1,n}= (k-1)J_{k-1,n}+J_{k-1,n}=k J_{k-1,n}.
\end{align*}
\end{proof}

\noindent{\it Proof of Theorem~\ref{ijl}.\,} We need only prove that
 \begin{equation}\label{jl2}
    J_{m,n}(z,\,\rho)=z^{m-n}(-1)^nn!L^{m-n}_n (\abs{z}^2,\,\rho), \quad m\ge n.
 \end{equation}
 The case $0= n\le m$ is obvious. We proceed by induction on $m,n$. Assume that Eq.(\ref{jl2}) is true for $l \le m <k$.

 Let $u=\abs{z}^2$. Since $\partial^* $ commutes with $\bar{ \partial}^*$, by the definition Eq.(\ref{itldefn}), we have that
  \begin{align*}
    J_{m,l+1}(z,\,\rho)&=\bar{\partial}^*J_{m,l}(z,\,\rho)=\bar{z}J_{m,l}(z,\,\rho)-\rho{\partial} J_{m,l}(z,\,\rho)\\
    &=\bar{z}J_{m,l}(z,\,\rho)-m\rho J_{m-1,l}(z,\,\rho)  \qquad \text{(by Eq.(\ref{pt}))}\\
    &=(-1)^l l!z^{m-l-1}(uL^{m-l}_l (u,\,\rho)-m\rho L^{m-l-1}_{l} (\abs{z}^2,\,\rho))\\
    &=(-1)^{l+1} (l+1)!z^{m-l-1}L^{m-l-1}_{l+1} (\abs{z}^2,\,\rho).\quad \text{(by  Eq.(\ref{recur2}))}
 \end{align*}
 \begin{align*}
    J_{k+1,n}(z,\,\rho)&=\partial^*J_{k,n}(z,\,\rho)=zJ_{k,n}(z,\,\rho)-\rho\bar{\partial} J_{k,n}(z,\,\rho)\\
    &=zJ_{k,n}(z,\,\rho)-n \rho J_{k,n-1}(z,\,\rho) \qquad \text{(by Eq.(\ref{pt}))}\\
    &=(-1)^n n!z^{k+1-n}(L^{k-n}_n (\abs{z}^2,\,\rho)-\rho L^{k-n+1}_{n-1} (\abs{z}^2,\,\rho))\\
    &=(-1)^n n!z^{k+1-n}L^{k+1-n}_n (\abs{z}^2,\,\rho).\qquad \text{(by Eq.(\ref{recur1}))}
 \end{align*}
{\hfill\large{$\Box$}}

\noindent{\it Proof of Theorem~\ref{thjl}.\,} Denote by $\innp{f,g}$ be the inner product in the complex Hilbert space $L_{\Cnum}^2(\mu)$.
 \begin{itemize}
 \item[\textup{1)}]
If $m-n\ge k-l\ge 0$ then
\begin{align*}
  \innp{J_{m,n},\,J_{k,l}}&= (-1)^{n+l}\frac{n!l!}{\pi \rho}\iint_{\Rnum^2} {z}^{m-n}\bar{z}^{k-l}L_n^{m-n}(\abs{z}^2,\,\rho)L_l^{k-l}(\abs{z}^2,\,\rho)e^{-\frac{x^2+y^2}{\rho}}\dif x\dif y\\
  & \quad ( \text{let } x=r\cos \theta,\, y=r\sin \theta \text{  and  }  s=r^2=x^2+y^2.)\\
  &= (-1)^{n+l}\frac{n!l!}{2\pi \rho}\int_0^{2\pi}e^{\mi [m-n-(k-l)[\theta}\dif \theta \int_0^{\infty} s^{m-n}e^{-\frac{s}{\rho} }L_n^{m-n}(s,\,\rho)L_l^{k-l}(s,\,\rho)\dif s\\
  &= (-1)^{n+l}\frac{n!l!}{\rho}\delta_{m-n,k-l}\int_0^{\infty} s^{m-n}e^{-\frac{s}{\rho} }L_n^{m-n}(s,\,\rho)L_l^{m-n}(s,\,\rho)\dif s\\
  &= {m!n!} \rho^{m+n}\delta_{nl}\delta_{m-n,k-l} \qquad \text{(by 3) of Proposition~\ref{lagu})}.
\end{align*}
The other cases are similar.

This proves that the collection $\set{(m!n!\rho^{m+n})^{-\frac12}J_{m,n}(z,\,\rho)}$ is an orthonomal system. Therefore $\set{J_{m,n}}$ are linearly independent. It follows from Eq.(\ref{jal}) that they generate by linear combination the complex vector space of polynomials in terms of $z,\,\bar{z}$ (equal to in terms of $x,y\in \Rnum$). It follows from Lemma~2.4 of \cite[P6]{Ma} that the polynomials in the coordinate functions are dense in $L^2_{\Cnum}(\mu)$ (Note that the Lemma is still valid for the complex polynomials and the complex Hilbert space). This proves the  orthonomal system is complete.

\item[\textup{2)}]Let $\alpha=m-n,\,u=\abs{z}^2$.
 It follows from Eq.(\ref{pt}) that
  \begin{equation}\label{lapal}
  \frac{\partial^2}{\partial z\partial \bar{z}}J_{m,n}(z,\,\rho)=mnJ_{m-1,\,n-1},
\end{equation}
and if $m\ge n$ then
\begin{align}
  & [(1+\mi c)z \frac{\partial}{\partial z}+(1-\mi c)\bar{z} \frac{\partial}{\partial \bar{z}}-2\rho \frac{\partial^2}{\partial z\partial \bar{z}} ]J_{m,n}(z,\,\rho)\nonumber\\
  &=(-1)^n n! z^{m-n} [(1+\mi c)mL^{\alpha-1}_n  -(1-\mi c)uL^{\alpha+1}_{n-1} + 2\rho m L^{\alpha}_{n-1}]\nonumber\\
  &=(-1)^n n! z^{m-n}\big[(2 m\rho L^{\alpha}_{n-1}+mL^{\alpha-1}_n+u \frac{\partial}{\partial u }L^{\alpha}_{n})+\mi (mL^{\alpha-1}_n-u \frac{\partial}{\partial u }L^{\alpha}_{n})c\big].\label{eig2}
\end{align}
Note that
\begin{align*}
  mL^{\alpha-1}_n-u \frac{\partial}{\partial u }L^{\alpha}_{n}&=mL^{\alpha-1}_n-n L_n^{\alpha}+m\rho L_{n-1}^{\alpha}\\
  &=m(L^{\alpha-1}_n+\rho L_{n-1}^{\alpha})-n L_n^{\alpha}
  =(m-n)L_n^{\alpha},
  \end{align*}
  and
\begin{align*}
   2 m\rho L^{\alpha}_{n-1}+mL^{\alpha-1}_n+u \frac{\partial}{\partial u }L^{\alpha}_{n}
  &=2 m\rho L^{\alpha}_{n-1}+mL^{\alpha-1}_n+ n L_n^{\alpha}- m\rho L_{n-1}^{\alpha}\\
  &=m(\rho  L_{n-1}^{\alpha}+L^{\alpha-1}_n)+n L_n^{\alpha}=(m+n)L_n^{\alpha}.
  \end{align*}
Substituting the above two equations into Eq.(\ref{eig2}) yields  Eq.(\ref{eigen}).
\footnote{There is a direct way to prove Eq.(\ref{eigen}) by using Eq.(E) of \cite[Theorem12]{ito}. In order to be self-contained, here we use the equality of Laguerre polynomials.}

\item[\textup{3)}] For the fixed $\lambda\in \Cnum$, obviously the function $w(z)=\exp\set{\lambda \bar{z} + \bar{\lambda}z-\rho |\lambda|^2}\in L^2(\mu)$. Thus $w(z)$ has a unique series expression
  \begin{equation}\label{express}
    w(z)=\sum_{m=0}^{\infty}\sum_{n=0}^{\infty}a_{m,n}\frac{J_{m,n}(z,\rho)}{m!n!\rho^{m+n}},
  \end{equation}
  where the coefficients $a_{m,n}$ are given by
  \begin{align*}
    a_{m,n}&=\innp{w,\,J_{m,n}}
    =\rho^{m+n}\innp{w,\,(\partial^*)^m(\bar{\partial}^*)^n 1}\\
    &= \rho^{m+n}\innp{\partial^m\bar{\partial}^n w,\, 1}=\bar{\lambda}^m\lambda^n \rho^{m+n}\innp{ w,\, 1}\\
    &=\bar{\lambda}^m\lambda^n \rho^{m+n}.
  \end{align*}
  Substituting it into Eq.(\ref{express}) yields Eq.(\ref{gene}).
  In addition, by the well-known classical global uniform estimates given by Szeg$\ddot{o}$ \cite{sg}: $|L^{\alpha}_n(x)| \le \frac{(\alpha+1)_n}{n!}e^{\frac{x}{2}},\,\,\alpha,x\ge 0$, one can show that the convergence is absolutely and uniform on compact sets in $(\lambda,z)$. Thus the convergence is also pointwise and the equality holds everywhere. \footnote{K.Ito showed Eq.(\ref{gene}) by means of power series expression \cite{ito}. }
\end{itemize}
\subsection{Ito's Complex Multiple Wiener Integral}\label{ssec5.3}
For the reader's convenience, we summarize Ito's work on complex multiple Wiener integral \cite{ito}.
By analogy with the relation between the real multiple Wiener-Ito integral and Hermite polynomials
\begin{equation}\label{1.1}
\begin{split}
  H_n(\xi_t)&=\int_0^t\int _0^{t}\cdots\int_0^{t}\,\dif {\xi}_{t_1}\cdots\dif {\xi}_{t_{n}},
\end{split}
\end{equation}
where $H_n$ denotes the $n$-th Hermite polynomial with leading coefficient 1,
K.Ito obtained the relation between Hermite-Laguerre-Ito polynomials and the complex multiple Wiener-Ito integral.

If $(B_1,B_2)$ denotes 2-dimensional Brownian motion we put $\zeta_t:=B_1(t)+\mi B_2(t)$ with $\mi=\sqrt{-1}$. $\zeta_t$ is called complex Brownian motion. Let  $\bar{\zeta}_t$ be the complex conjugate of $\zeta_t$.
\begin{nott}
  For $m,n\in \Nnum$, denote $F_{m,n}(\zeta_t)=\frac{(-1)^{m \wedge n}}{m!n!}J_{m,n}(\zeta_t,\Enum{\abs{\zeta}_t}^2) $.
\end{nott}

By the formula for integration by parts (stochastic product rule) and Ito's formula,
\begin{prop}\label{pp3}
$F_{m,n}(\zeta_t),\, m,n\in \Nnum$ satisfy that
\begin{equation}\label{digui}
  \dif F_{m,n}(\zeta_t)=F_{m-1,n}(\zeta_t)\dif \zeta_t+F_{m,n-1}(\zeta_t)\dif \bar{\zeta}_t.
\end{equation}
\end{prop}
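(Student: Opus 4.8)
The plan is to obtain (\ref{digui}) as a consequence of the complex Itô formula applied to the polynomial $J_{m,n}(z,\rho)$ evaluated along the pair $(z,\rho)=(\zeta_t,\rho_t)$, where $\rho_t:=\Enum\abs{\zeta_t}^2$ is itself a deterministic function of $t$. The first observation is that $\zeta_t=B_1(t)+\mi B_2(t)$ has the covariation structure $(\dif\zeta_t)^2=(\dif\bar\zeta_t)^2=0$ and $\dif\zeta_t\,\dif\bar\zeta_t=2\,\dif t$, so that $\rho_t=2t$ and, crucially, $\dif\rho_t=\dif\zeta_t\,\dif\bar\zeta_t$. Hence when $\dif J_{m,n}(\zeta_t,\rho_t)$ is expanded by Itô's formula the only surviving second-order term is the mixed one $\frac{\partial^2}{\partial z\partial\bar z}J_{m,n}\,\dif\zeta_t\,\dif\bar\zeta_t$, and it carries the same differential $\dif t$ as the parameter term $\frac{\partial}{\partial\rho}J_{m,n}\,\dif\rho_t$; this coincidence is the mechanism behind the whole proposition.

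First I would assemble the three derivative identities feeding the expansion. The first-order ones, $\partial J_{m,n}=mJ_{m-1,n}$ and $\bar\partial J_{m,n}=nJ_{m,n-1}$, are (\ref{pt}), and the mixed second derivative $\frac{\partial^2}{\partial z\partial\bar z}J_{m,n}=mnJ_{m-1,n-1}$ is (\ref{lapal}). The missing ingredient is the derivative in the parameter $\rho$. I would extract it either from the power-series form (\ref{jal}) by shifting the summation index, or, more slickly, from the generating function (\ref{gene}): writing $w(z,\rho)=\exp\set{\lambda\bar z+\bar\lambda z-\rho\abs{\lambda}^2}$ one has $\frac{\partial}{\partial\rho}w=-\abs{\lambda}^2w=-\frac{\partial^2}{\partial z\partial\bar z}w$, and matching coefficients of $\bar\lambda^m\lambda^n$ yields the key heat-type identity $\frac{\partial}{\partial\rho}J_{m,n}=-\frac{\partial^2}{\partial z\partial\bar z}J_{m,n}=-mnJ_{m-1,n-1}$.

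With these in hand the computation closes. Substituting into Itô's formula, the drift contribution is $\big(\frac{\partial}{\partial\rho}J_{m,n}+\frac{\partial^2}{\partial z\partial\bar z}J_{m,n}\big)\dif\rho_t=0$ by the key identity, so the entire drift cancels and only the martingale part remains:
\begin{equation*}
\dif J_{m,n}(\zeta_t,\rho_t)=mJ_{m-1,n}(\zeta_t,\rho_t)\,\dif\zeta_t+nJ_{m,n-1}(\zeta_t,\rho_t)\,\dif\bar\zeta_t.
\end{equation*}
Dividing through by the normalizing constants in $F_{m,n}=\frac{(-1)^{m\wedge n}}{m!n!}J_{m,n}$ then absorbs the prefactors $m$ and $n$ into the shifted factorials $\frac{1}{(m-1)!n!}$ and $\frac{1}{m!(n-1)!}$, which is what converts the displayed martingale recursion into (\ref{digui}).

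I expect the real work to be twofold. The conceptual heart is recognizing the identity $\frac{\partial}{\partial\rho}J_{m,n}=-\frac{\partial^2}{\partial z\partial\bar z}J_{m,n}$ and that it is precisely what makes the $\rho_t$-drift annihilate the mixed covariation term; without it the drift would not vanish. The more delicate, bookkeeping-heavy step is tracking the prefactor $\frac{(-1)^{m\wedge n}}{m!n!}$ across the index shifts $m\mapsto m-1$ and $n\mapsto n-1$, since the exponent $m\wedge n$ behaves differently according as $m>n$, $m=n$ or $m<n$; one must follow these cases (and treat the boundaries $m=0$ or $n=0$, where one of the two terms drops out) with care to confirm that the signs and factorials line up exactly with those hidden in $F_{m-1,n}$ and $F_{m,n-1}$.
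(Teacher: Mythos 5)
Your route is the paper's route: the paper offers no written proof beyond the phrase ``by the stochastic product rule and Ito's formula,'' and your It\^{o}-formula computation is the natural way to flesh that out. The covariation structure $(\dif\zeta_t)^2=(\dif\bar\zeta_t)^2=0$, $\dif\zeta_t\,\dif\bar\zeta_t=2\,\dif t=\dif\rho_t$ is right, the heat-type identity $\frac{\partial}{\partial\rho}J_{m,n}=-\frac{\partial^2}{\partial z\partial\bar z}J_{m,n}=-mnJ_{m-1,n-1}$ is correct and is indeed the mechanism that kills the drift, and the resulting martingale recursion
\begin{equation*}
\dif J_{m,n}(\zeta_t,\rho_t)=mJ_{m-1,n}(\zeta_t,\rho_t)\,\dif\zeta_t+nJ_{m,n-1}(\zeta_t,\rho_t)\,\dif\bar\zeta_t
\end{equation*}
is valid (check: $\dif J_{1,1}=\dif(\abs{\zeta_t}^2-2t)=\bar\zeta_t\,\dif\zeta_t+\zeta_t\,\dif\bar\zeta_t=J_{0,1}\,\dif\zeta_t+J_{1,0}\,\dif\bar\zeta_t$).

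The gap is that the one step you defer as ``bookkeeping'' is precisely the step that does not go through. Dividing the displayed recursion by $m!n!$ gives the clean identity for $G_{m,n}:=\frac{1}{m!n!}J_{m,n}$, namely $\dif G_{m,n}=G_{m-1,n}\,\dif\zeta_t+G_{m,n-1}\,\dif\bar\zeta_t$; but the extra factor $(-1)^{m\wedge n}$ in the paper's $F_{m,n}$ does \emph{not} propagate consistently under the shifts $m\mapsto m-1$ and $n\mapsto n-1$. For $m>n\ge1$ one has $(-1)^{m\wedge(n-1)}=-(-1)^{m\wedge n}$ while $(-1)^{(m-1)\wedge n}=(-1)^{m\wedge n}$, so the $\dif\bar\zeta_t$ term acquires the wrong sign; for $m=n\ge1$ both terms flip. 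Concretely, $F_{1,1}(\zeta_t)=-(\abs{\zeta_t}^2-2t)$ gives $\dif F_{1,1}=-F_{0,1}\,\dif\zeta_t-F_{1,0}\,\dif\bar\zeta_t$, which contradicts \eqref{digui} as written. So your expectation that ``the signs and factorials line up exactly'' is false: either the sign factor must be dropped from the definition of $F_{m,n}$ (in which case your argument finishes immediately), or case-dependent signs must be inserted into the recursion. You should carry out that check rather than assert it, since it exposes an inconsistency in the statement itself rather than a removable technicality.
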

By iteration,
\begin{cor}
$F_{m,n}(\zeta_t)$ can be decomposed into the iterated Ito integrals of complex Brownian motion as
  \begin{equation}\label{tens2}
    F_{m,n}(\zeta_t)=\sum \int_0^t\int _0^{t_{m+n}}\cdots\int_0^{t_2}\,\dif C_{t_1}\dif C_{t_2}\cdots \dif C_{t_{m+n}},
  \end{equation}
  where $0<t_1<t_2<\cdots <t_{m+n}<t$, $C_t=\zeta_t$ or $C_t=\bar{\zeta}_t$, and the sum is over all choose of $n$ positions of $\set{1,2,\dots,m+n}$ such that $C_t=\bar{\zeta}_t$ .
\end{cor}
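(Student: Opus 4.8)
The plan is to unroll the stochastic differential recursion of Proposition~\ref{pp3} by induction on the total order $N=m+n$. First I would record the two facts needed to launch the induction: $F_{0,0}(\zeta_t)=1$, and $F_{m,n}(\zeta_0)=0$ whenever $m+n\ge 1$. The latter holds because $\zeta_0=0$ and $\Enum\abs{\zeta_0}^2=0$, so in the power-series expression \eqref{jal} for $J_{m,n}$ every monomial carries a factor $z^{m-r}\bar z^{n-r}\rho^r$ which vanishes at $z=0,\rho=0$ as soon as $m+n\ge 1$ (the $r=0$ term dies on $z^m\bar z^n$, and the $r\ge 1$ terms die on $\rho^r$). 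Integrating \eqref{digui} from $0$ to $t$ and using $F_{m,n}(\zeta_0)=0$ then gives the integrated recursion
\[
F_{m,n}(\zeta_t)=\int_0^t F_{m-1,n}(\zeta_s)\,\dif\zeta_s+\int_0^t F_{m,n-1}(\zeta_s)\,\dif\bar\zeta_s,
\]
valid for $m+n\ge 1$, with the convention that a term is dropped when one of its indices is negative.

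Second, I would fix the combinatorial indexing that \eqref{tens2} encodes. For the time simplex $0<t_1<\cdots<t_{m+n}<t$ and a subset $S\subseteq\set{1,\dots,m+n}$ with $\abs{S}=n$, let $I_S$ denote the iterated Ito integral in which $\dif C_{t_i}=\dif\bar\zeta_{t_i}$ for $i\in S$ and $\dif C_{t_i}=\dif\zeta_{t_i}$ otherwise, the $t_i$-integration running from $0$ to $t_{i+1}$ and the outermost, over $t_{m+n}$, running to $t$. The assertion of the corollary is then exactly $F_{m,n}(\zeta_t)=\sum_{\abs{S}=n}I_S$, the sum ranging over all $\binom{m+n}{n}$ such subsets.

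Third comes the induction. The base case $N=0$ is $F_{0,0}=1$, the empty iterated integral. For the inductive step I substitute the integrated recursion and apply the hypothesis to the two integrands, of orders $m+n-1$. Since the outermost integrator $\dif C_{t_{m+n}}$ is the one introduced at the current level, the term $\int_0^t F_{m-1,n}(\zeta_s)\,\dif\zeta_s$ produces precisely those $I_S$ with $m+n\notin S$ (top slot unbarred, $n$ bars distributed over positions $1,\dots,m+n-1$), while $\int_0^t F_{m,n-1}(\zeta_s)\,\dif\bar\zeta_s$ produces precisely those $I_S$ with $m+n\in S$ (top slot barred, $n-1$ bars over the remaining positions). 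These two families are disjoint and together exhaust all $n$-subsets of $\set{1,\dots,m+n}$, the underlying count being Pascal's rule $\binom{m+n-1}{n}+\binom{m+n-1}{n-1}=\binom{m+n}{n}$. Summing yields $F_{m,n}(\zeta_t)=\sum_{\abs{S}=n}I_S$ and closes the induction.

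The argument is essentially bookkeeping once Proposition~\ref{pp3} is available, so the one place demanding care is the identification, in the inductive step, of ``the integrator introduced at the current level'' with ``the outermost integral, over $t_{m+n}$'': keeping the nesting of the time simplex consistent with the order in which the recursion is unrolled is exactly what makes the two term-families correspond to the split of the $n$-subsets according to whether they contain the top index $m+n$. I expect no analytic obstacle beyond observing that each $F_{m,n}(\zeta_t)$ is square-integrable, so that all the iterated Ito integrals appearing are well defined and the manipulations are legitimate.
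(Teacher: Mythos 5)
Your argument is correct and is exactly the route the paper takes: the paper disposes of this corollary with the single phrase ``By iteration'' applied to Proposition~\ref{pp3}, and your induction on $m+n$ --- with the initial conditions $F_{0,0}=1$ and $F_{m,n}(\zeta_0)=0$ for $m+n\ge 1$, and the Pascal-rule split of the $n$-subsets according to whether they contain the outermost index --- is precisely the bookkeeping that phrase leaves implicit. No gaps; you have simply written out what the paper asserts.
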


Using the approximation by off-diagonal step functions (i.e., the analogue of multiple Wiener-Ito integral \cite[Definition 9.6.5]{guo}),
\begin{eqnarray*}
  &&\int_0^t\int _0^{t}\cdots\int_0^{t}\,\dif \zeta_{t_1}\cdots\dif \zeta_{t_{m-j}} \dif \bar{\zeta}_{t_{m-j+1}} \cdots \dif\bar{\zeta}_{t_m}\\
  &=&  \sum \int_0^t\int _0^{t_m}\cdots\int_0^{t_2}\,\dif C_{t_1}\dif C_{t_2}\cdots \dif C_{t_m},
\end{eqnarray*}
which can be looked as a generalization of \cite[Theorem 9.6.7]{guo} to complex Brownian motion.
Thus,
\begin{cor}\label{cor5}
  $J_{m,n}(\zeta_t,\,\Enum{\abs{\zeta_t}}^2)$ is related to the complex multiple Wiener-Ito integral,
  \begin{equation*}\label{multiintg}
    J_{m,n}(\zeta_t,\,\Enum{\abs{\zeta_t}}^2)= (-1)^{j\wedge (m-j)}j!(m-j)!\int_0^t\int _0^{t}\cdots\int_0^{t}\,\dif \zeta_{t_1}\cdots\dif \zeta_{t_{m-j}} \dif \bar{\zeta}_{t_{m-j+1}} \cdots \dif\bar{\zeta}_{t_m}.
  \end{equation*}
\end{cor}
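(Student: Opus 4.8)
The plan is to derive the identity by reading a short chain of already-established equalities in reverse, so that no new stochastic estimate beyond the off-diagonal step-function approximation is needed. The three ingredients are: the definition $F_{m,n}(\zeta_t)=\frac{(-1)^{m\wedge n}}{m!n!}J_{m,n}(\zeta_t,\Enum\abs{\zeta_t}^2)$; the time-ordered iterated-integral expansion of $F_{m,n}(\zeta_t)$ given by the corollary preceding the statement; and the off-diagonal step-function identity displayed immediately above it, which rewrites the symmetric multiple integral over $[0,t]^m$ as the sum of its time-ordered iterated pieces. I would first fix the roles of the indices, then substitute the iterated-integral expansion, and finally clear the normalising constant.

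To carry this out I would set $p=m-j$ and $q=j$, so that the integral on the right-hand side of the claim is the complex multiple Wiener--Ito integral of total order $p+q=m$ carrying $p$ factors $\dif\zeta$ and $q$ factors $\dif\bar\zeta$; call it $I_{p,q}$. By the off-diagonal step-function identity, $I_{p,q}=\sum\int_0^t\int_0^{t_m}\cdots\int_0^{t_2}\dif C_{t_1}\cdots\dif C_{t_m}$, the sum ranging over the $\binom{m}{q}$ assignments of $q$ of the $m$ ordered slots to $\dif\bar\zeta$ and the remaining $p$ to $\dif\zeta$. By the preceding corollary this sum is exactly $F_{p,q}(\zeta_t)$, whence $I_{p,q}=\frac{(-1)^{p\wedge q}}{p!q!}J_{p,q}(\zeta_t,\Enum\abs{\zeta_t}^2)$ after inserting the definition of $F_{p,q}$. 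Solving for $J_{p,q}$ gives $J_{p,q}=(-1)^{p\wedge q}p!q!\,I_{p,q}$, which is the asserted formula with constant $(-1)^{j\wedge(m-j)}j!(m-j)!$; the remaining sign and factorial bookkeeping is then routine.

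The one genuinely substantive step is the off-diagonal step-function approximation for complex Brownian motion, i.e.\ the complex analogue of Theorem~9.6.7 in \cite{guo}. The hard part will be to define $I_{p,q}$ through off-diagonal simple functions on $[0,t]^m$, to prove that the diagonal sets contribute nothing in the $L^2(\mu)$ limit, and to check that partitioning $[0,t]^m$ into its $m!$ time-ordered simplices reproduces precisely the sum of iterated Ito integrals above. The new feature relative to the real theory is that two kinds of increment appear: since $\dif\zeta_s\,\dif\zeta_s=0$ and $\dif\bar\zeta_s\,\dif\bar\zeta_s=0$ but $\dif\zeta_s\,\dif\bar\zeta_s=2\,\dif s\neq 0$, the coincidences that must be excluded off the diagonal are exactly those where a $\dif\zeta$-slot meets a $\dif\bar\zeta$-slot, and it is these coincidences that generate the $\Enum\abs{\zeta_t}^2$-corrections already visible in the lower-order polynomials such as $J_{1,1}(\zeta_t,\Enum\abs{\zeta_t}^2)=\abs{\zeta_t}^2-\Enum\abs{\zeta_t}^2$. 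Once the approximation and the vanishing of the excluded diagonals are established, the corollary follows immediately from the displayed chain of equalities.
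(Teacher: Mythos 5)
Your proposal is correct and follows essentially the same route as the paper: it combines the iterated-integral expansion of $F_{m,n}(\zeta_t)$ from the preceding corollary with the off-diagonal step-function identity equating the symmetric multiple integral to the sum of its time-ordered pieces, then clears the normalising constant $\frac{(-1)^{m\wedge n}}{m!n!}$ from the definition of $F_{m,n}$. Your additional remarks on what the complex analogue of the off-diagonal approximation requires (and your tacit correction of the index bookkeeping, reading the left-hand side as $J_{m-j,\,j}$) go slightly beyond what the paper writes out, but the argument is the same.
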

{\hfill\large{$\Box$}}

\vskip 0.2cm {\small {\bf  Acknowledgements}\   This work was
partly supported by  NSFC(No.11071008, No.11101137.)


\end{document}